\DeclareRobustCommand{\cyrtext}{%
  \fontencoding{T2A}\selectfont\def\encodingdefault{T2A}}
\DeclareRobustCommand{\textcyr}[1]{\leavevmode{\cyrtext #1}}
\providecommand{\tabularnewline}{\\}
\numberwithin{equation}{section}
\numberwithin{figure}{section}
\theoremstyle{plain}
\newtheorem{thm}{\protect\theoremname}[section]
\theoremstyle{remark}
\newtheorem{rem}[thm]{\protect\remarkname}
\theoremstyle{plain}
\newtheorem{lem}[thm]{\protect\lemmaname}
\theoremstyle{definition}
\newtheorem{defn}[thm]{\protect\definitionname}
\theoremstyle{definition}
\newtheorem{example}[thm]{\protect\examplename}
\theoremstyle{plain}
\newtheorem{assumption}[thm]{\protect\assumptionname}
\providecommand{\assumptionname}{Assumption}
\providecommand{\definitionname}{Definition}
\providecommand{\examplename}{Example}
\providecommand{\lemmaname}{Lemma}
\providecommand{\remarkname}{Remark}
\providecommand{\theoremname}{Theorem}
\begin{document}
\title{Path weighting method for spot sensitivities and its variance reduction}
\author{Xuan Liu\thanks{Nomura Securities, Hong Kong SAR. Email: chamonixliu@163.com}
\enspace{}and Michel Gauthier\thanks{Nomura Securities, Tokyo, Japan. Email: michel.gauthier@nomura.com}}
\maketitle
\begin{abstract}
In this paper, we study the computation of sensitivities with respect
to spot of path dependent financial derivatives by means of path weighting.
We propose explicit path weighting formula and variance reduction
adjustment in order to address the large variance happening when the
first simulation time step is small. We also propose a covariance
inflation technique to addresses the degenerator case when the covariance
matrix is singular. The stock dynamics we consider is given in a general
functional form, which includes the classical Black--Scholes model,
the implied distribution model, and the local volatility model.
\end{abstract}

\section{Introduction\label{sec:Introduction}}

The computation of greeks for exotic financial derivatives is critical
to the pricing and hedging of financial derivatives. The most widely
used method for greeks computation is the finite difference method
(combined with Monte Carlo simulation). It is well known that the
convergence of greeks becomes very slow when there is discontinuity
in the payoff function. In such case, the greek--spot graph is usually
spiky and noisy. There has been research in alternative numerical
methods such as pathwise sensitivities, likelihood ratio methd (cf.
\cite{Gla04}), and Malliavin weighting method (which expresses greeks
as an expectation of the payoff with paths weighted by a stochastic
integral). For many cases of interest, the likelihood ratio method
and Malliavin weighting method are equivalent. Therefore we prefer
calling them path weighting method. To the best knowledge of the authors
of the current paper, the path weighting method is not used in industry
is mainly due to two reasons: (i) the path weighting formula in existing
academic research are not explicit enough for diffusion models used
in industry; and (ii) the convergence of path weighting sensitivities
is not as good as finite difference. In this paper, we address the
first problem by deriving the path weighting formula for discrete
dynamics, which computes the full Delta vector and Gamma matrix as
a single pricing. For the second problem, we propose a variance reduction
adjustment, which improves the convergence significantly at the cost
of one more pricing for full Delta vector and two more pricing for
full Gamma matrix. We also derive asymptotic variance order for the
adjustment formula under different regularity assumptions made on
the payoff function. Finally, we propose the covariance inflation
technique, which not only addresses singular covariance matrices,
but also further reduces the variance of the MC estimators.

Code for all numerical tests is available in the open source library
\texttt{pwsen}\footnote{Source code available at \href{https://github.com/liuxuan1111/pwsen}{https://github.com/liuxuan1111/pwsen}}. 

\section{\label{sec:-1}The Path Weighting Formula}

\subsection{\label{subsec:-1}General Case}

Denote the stock prices at time $t$ by a column vector
\[
S_{t}=\big(S_{t}^{(1)},\dots,S_{t}^{(n)}\big)^{\text{T}}.
\]
Consider a Markovian diffusion model for stock dynamics
\begin{align*}
dS_{t} & =a(t,S_{t})dt+b(t,S_{t})dW_{t},\\
d\langle W^{(i)} & ,W^{(j)}\rangle=\Sigma_{ij}dt,
\end{align*}
where $a(t,s)$, $b(t,s)$ are $\mathbb{R}^{n\times n}$-valued functions,
and $\Sigma$ is the correlation matrix. For Monte--Carlo simulation,
the Euler--Maruyama discretisation of the above SDE is used
\[
\begin{aligned}S_{t_{i+1}} & =S_{t_{i}}+\Delta t_{i+1}a(t_{i},S_{t_{i}})+\sqrt{\Delta t_{i+1}}b(t_{i},S_{t_{i}})\rho Z_{i+1},\\
S_{t_{0}} & =s,
\end{aligned}
\]
where $s>0$ is the spot price, $\Delta t_{i+1}=t_{i+1}-t_{i}$, and
$\{Z_{i}\}_{i>0}$ is a sequence of i.i.d. standard normal random
variables, and 
\begin{equation}
\rho=Q\Lambda\label{eq:-2}
\end{equation}
 with $Q^{\text{T}}\Sigma Q=\Lambda^{2}$ being the eigenvalue decomposition
of $\Sigma$. 

We may write the above dicretised dynamics in a slightly more general
form
\begin{equation}
\begin{aligned}S_{t_{i+1}} & =H(t_{i+1},\Delta t_{i+1},S_{t_{i}},\sqrt{\Delta t_{i+1}}Z_{i+1})\\
S_{t_{0}} & =s,
\end{aligned}
\label{eq:}
\end{equation}
or simply
\begin{equation}
S_{t_{i+1}}=H_{i}(S_{t_{i}},\sqrt{\Delta t_{i+1}}Z_{i+1}).\label{eq:-1}
\end{equation}
Throughout this paper, we assume that the functional $H_{0}(s,x)$
is smooth in $s$ and $x$, and that the matrix $\partial_{x}H_{0}$
is invertable. As we shall see, the non-singularity of $\partial_{x}H$
requires non-singular correlation matrix $\Sigma$.
\begin{rem}
The functional $H_{i}(s,x)$ in recursion (\ref{eq:}) is general
enough to include the following cases:
\end{rem}

\begin{itemize}
\item Implied distribution model.
\item Euler--Maruyama discretisation of local volatility model.
\item Piecewise Black--Scholes approximation
\[
dS_{t}=a(t_{i},S_{t_{i}})S_{t}dt+\sigma(t_{i},S_{t_{i}})S_{t}dW_{t},\;t\in[t_{i},t_{i+1}),
\]
or equivalently,
\[
S_{t_{i+1}}=S_{t_{i}}\cdot\exp\Big[\Big(a(t_{i},S_{t_{i}})-\frac{\sigma(t_{i},S_{t_{i}})^{2}}{2}\Big)\Delta t_{i+1}+\sigma(t_{i},S_{t_{i}})\sqrt{\Delta t_{i+1}}Z_{i+1}\Big].
\]
\end{itemize}
Let $F(S_{t_{1}},\dots,S_{t_{N}})$ be the payoff of a financial derivatives
(or its approximation corresponding to the SDE dscretisation). Its
price is given by the expectation under the risk neutral probability
measure
\[
f(s)=\mathbb{E}[F(S_{t_{1}},\dots,S_{t_{N}})|S_{t_{0}}=s].
\]
We are interested in numerical calculation of $f^{\prime}(s)$. 

Assume that $0=t_{0}<t_{1}<\cdots<t_{N}=T$ are the times for SDE
discretisation, and that evolution of stock prices are given by the
functional form
\begin{equation}
\begin{aligned}S_{t_{i+1}}^{(j)} & =H_{i}^{(j)}(S_{t_{i}},\sqrt{\Delta t_{i+1}}Z_{i+1}),\;0\le i<N,\\
S_{t_{0}}^{(j)} & =s_{j},
\end{aligned}
\label{eq:-14}
\end{equation}
where $H_{i}^{(j)}(s,x)$ have continuous second derivatives with
respect to $x$, $\Delta t_{i+1}=t_{i+1}-t_{i}$, $s=(s_{1},\dots,s_{n})^{\text{T}}$
is the stock spot (column) vector, and 
\[
Z_{i+1}=\big(Z_{i+1}^{(1)},\dots,Z_{i+1}^{(n)}\big)^{\text{T}},\;0\le0<N,
\]
are i.i.d. standard normal random (column) vectors.

The theorem below presents path weighting formula. A proof of Theorem
\ref{thm:-3} is given in Appendix \ref{subsec:-2}.
\begin{thm}
\label{thm:-3}Let $F(S_{t_{1}},\dots S_{t_{N}})$ be a measurable
function with at most polynomial growth at infinity, and 
\[
f(s)=\mathbb{E}[F(S_{t_{1}},\dots,S_{t_{N}})|S_{t_{0}}=s].
\]
Denote
\begin{equation}
J(s,x)=(\partial_{x}H_{0})^{-1}(\partial_{s}H_{0}).\label{eq:-27}
\end{equation}
Then
\begin{equation}
\begin{aligned}\partial_{s^{(l)}}f(s) & =\mathbb{E}\Big[F(S)\Big(\sum_{k=0}^{1}\frac{1}{(\Delta t_{1})^{k/2}}\Theta_{l}^{(k)}(s,Z_{1},\sqrt{\Delta t_{1}}Z_{1})\Big)\Big]\end{aligned}
,\label{eq:-19-1}
\end{equation}
where
\begin{align}
\Theta_{l}^{(0)}(s,z,x) & =-\sum_{p=1}^{n}\partial_{x^{(p)}}J_{pl}(s,x),\label{eq:-20}\\
\Theta_{l}^{(1)}(s,z,x) & =\sum_{p=1}^{n}z^{(p)}J_{pl}(s,x),\label{eq:-21}
\end{align}
with $s,z,x\in\mathbb{R}^{1\times n}$. Moreover,
\begin{equation}
\begin{aligned}\partial_{s^{(l)}s^{(m)}}^{2}f(s) & =\mathbb{E}\Big[F(S)\Big(\sum_{k=0}^{2}\frac{1}{(\Delta t_{1})^{k/2}}\Lambda_{lm}^{(k)}(s,Z_{1},\sqrt{\Delta t_{1}}Z_{1})\Big)\Big]\end{aligned}
,\label{eq:-11}
\end{equation}
where 
\begin{align}
\Lambda_{lm}^{(0)}(s,z,x) & =\sum_{p,r}\big(\partial_{x^{(r)}}J_{rm}\partial_{x^{(p)}}J_{pl}\big)(s,x)\nonumber \\
 & \quad+\sum_{p,r}\big(J_{rm}\partial_{x^{(r)}x^{(p)}}^{2}J_{pl}\big)(s,x)-\sum_{p}\partial_{s^{(m)}x^{(p)}}^{2}J_{pl}(s,x),\label{eq:-5}\\
\Lambda_{lm}^{(1)}(s,z,x) & =\sum_{p}z^{(p)}\partial_{s^{(m)}}J_{pl}(s,x)-\sum_{p,r}z^{(p)}\big(J_{rm}\partial_{x^{(r)}}J_{pl}\big)(s,x)\nonumber \\
 & \quad-\sum_{p,r}z^{(p)}\big(\partial_{x^{(r)}}J_{rm}\cdot J_{pl}\big)(s,x)-\sum_{p,r}z^{(r)}\big(J_{rm}\cdot\partial_{x^{(p)}}J_{pl}\big)(s,x),\label{eq:-12}\\
\Lambda_{lm}^{(2)}(s,z,x) & =\sum_{p,r}z^{(r)}z^{(p)}(J_{rm}J_{pl}-\delta_{rp}J_{rm}J_{pl})(s,x),\label{eq:-13}
\end{align}
with $s,z,x\in\mathbb{R}^{1\times n}$ and $\delta_{rp}$ being the
Dirac notation. 
\end{thm}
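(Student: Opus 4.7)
The plan is to reduce both statements to manipulations at the first time step, where all $s$-dependence is concentrated. Since $s$ enters the payoff $F(S_{t_{1}},\ldots,S_{t_{N}})$ only through $S_{t_{1}}=H_{0}(s,\sqrt{\Delta t_{1}}Z_{1})$, one may write $f(s)=\mathbb{E}[G(H_{0}(s,\sqrt{\Delta t_{1}}Z_{1}))]$ where $G$ absorbs the remaining Gaussians $(Z_{2},\ldots,Z_{N})$ and inherits polynomial growth from $F$. Differentiating under the expectation and using the chain rule
\[
\partial_{Z_{1}^{(q)}}[G\circ H_{0}]=\sqrt{\Delta t_{1}}\sum_{r}\partial_{x^{(q)}}H_{0}^{(r)}\,\partial_{u^{(r)}}G,
\]
together with invertibility of $\partial_{x}H_{0}$, one re-expresses the Delta as
\[
\partial_{s^{(l)}}f=\frac{1}{\sqrt{\Delta t_{1}}}\sum_{q}\mathbb{E}\big[J_{ql}(s,\sqrt{\Delta t_{1}}Z_{1})\,\partial_{Z_{1}^{(q)}}(G\circ H_{0})\big].
\]
A single integration by parts against the standard Gaussian density $\varphi$ of $Z_{1}$, using $-\partial_{z^{(q)}}\log\varphi(z)=z^{(q)}$, distributes the $Z_{1}^{(q)}$-derivative onto the weight $J_{ql}$ (producing a $-\sqrt{\Delta t_{1}}\partial_{x^{(q)}}J_{ql}$ piece) and onto $\varphi$ (producing a Hermite $+Z_{1}^{(q)}J_{ql}$ piece). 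Sorting by power of $\sqrt{\Delta t_{1}}$ then yields (\ref{eq:-19-1}) with $\Theta^{(0)}_{l}$ and $\Theta^{(1)}_{l}$ exactly as in (\ref{eq:-20})--(\ref{eq:-21}).

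For the Gamma I would iterate the Delta machinery. Differentiating the identity $\partial_{s^{(l)}}f=\mathbb{E}[F\cdot W_{l}]$ in $s^{(m)}$, where $W_{l}=\Theta^{(0)}_{l}+(\Delta t_{1})^{-1/2}\Theta^{(1)}_{l}$, produces
\[
\partial^{2}_{s^{(l)}s^{(m)}}f=\mathbb{E}[\partial_{s^{(m)}}F\cdot W_{l}]+\mathbb{E}[F\cdot\partial_{s^{(m)}}W_{l}].
\]
The second term is a direct computation and contributes $\partial_{s^{(m)}}J_{pl}$ and $\partial^{2}_{s^{(m)}x^{(p)}}J_{pl}$ pieces at orders $(\Delta t_{1})^{-1/2}$ and $(\Delta t_{1})^{0}$ respectively; these land in $\Lambda^{(1)}_{lm}$ and $\Lambda^{(0)}_{lm}$. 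The first term is handled by re-applying the chain-rule plus IBP machinery of the Delta to $\partial_{s^{(m)}}F$, with $J_{rm}$ providing an additional external factor; this generates the products $J_{rm}J_{pl}$, $J_{rm}\partial_{x^{(r)}}J_{pl}$, $\partial_{x^{(r)}}J_{rm}\cdot J_{pl}$, and $\partial_{x^{(r)}}J_{rm}\partial_{x^{(p)}}J_{pl}$, as well as products involving the second derivative $J_{rm}\partial^{2}_{x^{(r)}x^{(p)}}J_{pl}$ that match $\Lambda^{(0)}_{lm}$. The crucial $(\Delta t_{1})^{-1}$ contribution arises only from the ``pure Hermite'' part of this second IBP acting on the external $Z_{1}^{(p)}J_{pl}$ coming from $\Theta^{(1)}_{l}$: when $\partial_{Z_{1}^{(r)}}$ hits the explicit $z^{(p)}$ factor it produces a Kronecker $\delta_{rp}$, which is precisely the correction present in $\Lambda^{(2)}_{lm}$ of (\ref{eq:-13}). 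Grouping the remaining cross-terms by power of $\sqrt{\Delta t_{1}}$ then matches (\ref{eq:-5}) and (\ref{eq:-12}).

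The main obstacle is the bookkeeping for the Gamma: each chain-rule step, each direct $s$-derivative of $W_{l}$, and each IBP either introduces an additional $\partial_{x}J$ factor (carrying one extra $\sqrt{\Delta t_{1}}$) or a pure $Z_{1}$-Hermite factor (with none), so reassembling the three expressions for $\Lambda^{(0)}_{lm}, \Lambda^{(1)}_{lm}, \Lambda^{(2)}_{lm}$ requires careful sorting of contributions by power of $(\Delta t_{1})^{-1/2}$ and keeping track of the distinction between the ``plain'' $z$-variable in $\Theta$ and the ``shifted'' $x=\sqrt{\Delta t_{1}}z$ variable inside $J$. The remaining analytic steps---interchange of differentiation and expectation, and vanishing of IBP boundary terms---are routine given polynomial growth of $F$, the smoothness and polynomial bounds on $J$ and its derivatives (inherited from smoothness of $H_{0}$ and invertibility of $\partial_{x}H_{0}$), and the Gaussian decay of $\varphi$.
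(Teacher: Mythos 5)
Your proposal is correct and takes essentially the same approach as the paper. The key observation — that all $s$-dependence enters through $S_{t_1}=H_0(s,\sqrt{\Delta t_1}Z_1)$, so the $s$-derivative can be re-expressed as a $J$-weighted $Z_1$-derivative and then moved off the payoff by Gaussian integration by parts — is precisely the paper's argument (the paper phrases it via the identity $\partial_s S_{t_j}=\partial_{X_1}S_{t_j}\cdot J(s,X_1)$ rather than by absorbing the later Gaussians into $G$, but this is a notational difference). For the Gamma, both you and the paper split $\partial_{s^{(m)}}\mathbb{E}[F\,W_l]$ by the product rule, handle the $\partial_{s^{(m)}}W_l$ piece directly, and treat the $\partial_{s^{(m)}}F$ piece by a second round of chain rule plus IBP with the extra $J_{\cdot m}$ factor; the $\delta_{rp}$ appearing in $\Lambda^{(2)}_{lm}$ arises exactly from the IBP derivative hitting the explicit $X_1^{(p)}$ factor, as you note. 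The one step you do not mention explicitly, but which the paper states, is the reduction to smooth $F$ by mollification (to justify the formal differentiation under the expectation); your closing remark about routine analytic steps gestures at this but should be spelled out, since the theorem is stated for merely measurable $F$ with polynomial growth.
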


\begin{rem}
We should point out that the definitions of $\Theta^{(k)}$ and $\Lambda^{(k)}$
treat $z$ and $x$ as independent variables. This treatment is crusial
for the variance reduction formula in Section \ref{sec:}.
\end{rem}

\begin{rem}
For a basket option with basket size $n$, the finite difference approximation
of the Delta vector requires $2nM$ simulations, and that of the Gamma
matrix requires $[3n+4\cdot(n^{2}-n)/2]=(2n^{2}+n)M$ simulations\footnote{Each diagonal gamma requires $3M$ simulations, and each cross gamma
requires $4M$ simulations.}, where $M$ is the number of sample paths. In contrast, the path
weighting formula of the Delta vector and Gamma matrix only requires
$M$ simulations, plus a few matrix operator at $t_{1}$ only, which
is usually less expensive than applying the correlation matrix to
the driving white noise. The computation cost reduction of the path
weighting sensitivities is clearly an advantage. Moreover, the path
weighting sensitivities has the advantage of being unbiased. However,
as we shall see at the beginning of Section \ref{sec:}, the path
weighting sensitivities also suffers from large variance. This is
the main bottleneck preventing it from being useful in practice.
\end{rem}

Equations (\ref{eq:-20})--(\ref{eq:-21}) and (\ref{eq:-5})--(\ref{eq:-13})
can be written more concisely using matrix notation. This is also
preferrable for programming languages which parallelise matrix operations
. Denote the divergence
\[
\mathrm{div}_{x}(J)=\Big(\sum_{p}\partial_{x^{(p)}}J_{p1},\dots,\sum_{p}\partial_{x^{(p)}}J_{pn}\Big).
\]
The derivative matrices of $\mathrm{div}_{x}(J)$ is
\[
\begin{aligned}\big[\partial_{x}\big(\mathrm{div}_{x}(J)\big)\big]_{lr} & =\partial_{x^{(r)}}\Big(\sum_{p}\partial_{x^{(p)}}J_{pl}\Big)=\sum_{p}\partial_{x^{(r)}x^{(p)}}^{2}J_{pl},\\
\big[\partial_{x}\big(\mathrm{div}_{x}(J)\big)\big]_{lm} & =\partial_{s^{(m)}}\Big(\sum_{p}\partial_{x^{(p)}}J_{pl}\Big)=\sum_{p}\partial_{s^{(m)}x^{(p)}}^{2}J_{pl}.
\end{aligned}
\]
Moreover,
\begin{align}
\Theta^{(0)}(s,z,x) & =-\mathrm{div}_{x}(J)(s,x),\label{eq:-20-1}\\
\Theta_{l}^{(1)}(s,z,x) & =(z^{\text{T}}J)(s,x),\label{eq:-21-1}
\end{align}
and
\begin{align}
\Lambda^{(0)}(s,z,x) & =\big[\mathrm{div}_{x}(J)^{\text{T}}\mathrm{div}_{x}(J)+\partial_{x}\big(\mathrm{div}_{x}(J)\big)\cdot J-\partial_{s}\big(\mathrm{div}_{x}(J)\big)\big](s,x),\label{eq:-15}\\
\Lambda^{(1)}(s,z,x) & =\big[\partial_{s}\big(z^{\text{T}}J\big)-\partial_{x}\big(z^{\text{T}}J\big)\cdot J-\big(z^{\text{T}}J\big)^{\text{T}}\mathrm{div}_{x}(J)-\mathrm{div}_{x}(J)^{\text{T}}\big(z^{\text{T}}J\big)\big](s,x),\label{eq:-18}\\
\Lambda^{(2)}(s,z,x) & =\big[(z^{\text{T}}J)^{\text{T}}(z^{\text{T}}J)-J^{\text{T}}J\big](s,x).\label{eq:-17}
\end{align}

\begin{rem}
Note that (\ref{eq:-11}) and the symmetricity of the matrix $\partial^{2}f$
implies that $\Lambda^{(0)}$, $\Lambda^{(1)}$, and $\Lambda^{(2)}$
are symmetric matrices. While the matrix $\Lambda^{(2)}$ is clearly
symmetric, it is not immediate for the matrices $\Lambda^{(0)}$ and
$\Lambda^{(1)}$. Lemma \ref{lem:-2} below confirms that $\Lambda^{(0)}$
and $\Lambda^{(1)}$ are both symmetric. In Appendix \ref{subsec:-3},
we give an algebraic proof to Lemma \ref{lem:-2}.
\end{rem}

\begin{lem}
\label{lem:-2}(i) 
\[
\sum_{r}\partial_{x^{(r)}}J_{pm}\cdot J_{rl}+\partial_{s^{(m)}}J_{pl}
\]
is symmetric with respect to $(m,l)$.

(ii) 
\[
\sum_{p,r}J_{rm}\partial_{x^{(r)}x^{(p)}}^{2}J_{rl}-\sum_{p}\partial_{s^{(m)}x^{(p)}}^{2}J_{pl}
\]
is symmetric with respect to $(m,l)$.

(iii) The matrices $\Lambda^{(0)}$ and $\Lambda^{(1)}$ are symmetric.
\end{lem}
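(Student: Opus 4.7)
The plan is to exploit the local invertibility of the map $x \mapsto H_0(s, x)$, guaranteed by the standing hypothesis that $\partial_x H_0$ is non-singular. Setting $y = H_0(s, x)$ and letting $\phi(s, y)$ denote the local inverse in $x$, so that $H_0(s, \phi(s, y)) \equiv y$, I would differentiate this identity in $s^{(m)}$ with $y$ held fixed to obtain $\partial_{s^{(m)}} H_0 + (\partial_x H_0)(\partial_{s^{(m)}} \phi) = 0$. Combined with the definition $J = (\partial_x H_0)^{-1} \partial_s H_0$ from (\ref{eq:-27}), this yields the clean identity
\[
\partial_{s^{(m)}} \phi^{(p)}(s, y) = -J_{pm}(s, \phi(s, y)).
\]
All three parts will follow from iterated use of this relation together with Schwarz's theorem applied to $\phi$.

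For (i), I would apply the chain rule once more. Using the identity above and $\partial_{s^{(l)}} \phi^{(r)} = -J_{rl}(s,\phi)$,
\[
\partial^2_{s^{(l)} s^{(m)}} \phi^{(p)} = -\partial_{s^{(l)}} J_{pm}(s, \phi) - \sum_r \partial_{x^{(r)}} J_{pm}(s, \phi) \cdot \partial_{s^{(l)}} \phi^{(r)} = -\partial_{s^{(l)}} J_{pm} + \sum_r J_{rl}\, \partial_{x^{(r)}} J_{pm}.
\]
Equating this to its $(m, l)$-swap (which holds by Schwarz) and rearranging gives exactly the symmetry of $\partial_{s^{(m)}} J_{pl} + \sum_r \partial_{x^{(r)}} J_{pm} \cdot J_{rl}$ claimed in (i).

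Part (ii) should fall out by differentiating the identity from (i) in $x^{(p)}$ and summing over $p$. Each side of the resulting equation produces three contributions: a second $x$-derivative term of the form $\sum_{p,r} J_{r\cdot}\, \partial^2_{x^{(p)} x^{(r)}} J_{p\cdot}$, a mixed $s$-$x$ derivative $\sum_p \partial^2_{x^{(p)} s^{(\cdot)}} J_{p\cdot}$, and a quadratic cross term $\sum_{p,r} \partial_{x^{(p)}} J_{rm} \cdot \partial_{x^{(r)}} J_{pl}$. A straightforward relabelling of the dummies $p \leftrightarrow r$ shows this cross term is invariant under the $(m, l)$-swap, so it cancels across the equation, leaving precisely the combination $\sum_{p,r} J_{rm}\, \partial^2_{x^{(r)} x^{(p)}} J_{pl} - \sum_p \partial^2_{s^{(m)} x^{(p)}} J_{pl}$ asserted to be symmetric.

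Finally, for (iii) I would substitute (i) and (ii) into the matrix formulas (\ref{eq:-15}) and (\ref{eq:-18}). In $\Lambda^{(0)}$ the outer product $\mathrm{div}_x(J)^{\text{T}}\,\mathrm{div}_x(J)$ is manifestly symmetric and the remaining two terms constitute exactly the combination treated in (ii). In $\Lambda^{(1)}$ the first two summands of (\ref{eq:-12}) regroup as $\sum_p z^{(p)}\bigl[\partial_{s^{(m)}} J_{pl} - \sum_r J_{rm}\, \partial_{x^{(r)}} J_{pl}\bigr]$, which is symmetric in $(m,l)$ by (i), and the last two factor as $-[\mathrm{div}_x(J)]_m\, (z^{\text{T}}J)_l - (z^{\text{T}}J)_m\, [\mathrm{div}_x(J)]_l$, which is symmetric by inspection. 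The only genuinely delicate step in the entire argument is the index bookkeeping in part (ii), where one must be careful to relabel the cross product cleanly; everything else is routine chain-rule expansion.
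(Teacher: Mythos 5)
Your proposal is correct, and it takes a genuinely different—and in part (ii), noticeably cleaner—route than the paper. The paper works directly with the identity $\partial_x H_0 \cdot J = \partial_s H_0$: for (i) it differentiates twice in $s$, packages the result as $A^i_{lm} = \sum_p \partial_{x^{(p)}} H_0^{(i)}\,\xi_p^{l,m}$, observes $A^i_{lm}$ is symmetric, and finally strips off the index $i$ by invertibility of $\partial_x H_0$; for (ii) it runs a product-rule rearrangement of $A^i_{lm}$ and then relies on the observation that $J$ is unchanged under $H_0^{(i)} \mapsto H_0^{(i)} + C$ (so that one may assume $H_0^{(i)} \neq 0$ pointwise) to divide out $H_0^{(i)}$ and conclude. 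You instead introduce the local inverse $\phi(s,\cdot)$ of $x \mapsto H_0(s,x)$—which exists by the standing non-singularity hypothesis—and reduce the whole lemma to Schwarz's theorem applied to $\phi$. This makes (i) a one-line consequence of $\partial^2_{s^{(l)}s^{(m)}}\phi = \partial^2_{s^{(m)}s^{(l)}}\phi$, and (ii) then falls out immediately by differentiating the identity from (i) in $x^{(p)}$ and summing over $p$, with the quadratic cross terms $\sum_{p,r}\partial_{x^{(r)}}J_{pm}\,\partial_{x^{(p)}}J_{rl}$ visibly symmetric under the $p\leftrightarrow r$ relabelling. This avoids the paper's ad hoc division-by-$H_0^{(i)}$ step entirely; the inverse function theorem is doing the same job that the invertibility of $\partial_x H_0$ does in the paper, but it does it once, up front, and more transparently. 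Your reduction of $\Lambda^{(1)}$ in (iii) is also the same grouping the paper uses, so the two proofs converge there. The only mildly glossed step is that after the cross terms cancel in (ii) you still need to move one term across the equality to land exactly on the asserted combination with the minus sign, but that is elementary and harmless.
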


\subsection{\label{subsec:}Correlated Diagonal Dynamics: An Application}

In this subsection, we apply the result in Section \ref{subsec:-1}
to a type of dynamics which is widely used in the industry. 
\begin{defn}
The dynamics (\ref{eq:-14}) is called \emph{correlated diagonal}
if there exists a functional
\[
L(s,w)=(L^{(1)}(s^{(1)},w^{(n)}),\dots,L^{(n)}(s^{(n)},w^{(n)}))
\]
and a correlation matrix $\rho\rho^{\text{T}}$ such that
\begin{equation}
H_{0}^{(j)}(s,x)=L^{(j)}\Big(s_{j},\sum_{r=1}^{n}\rho_{jr}x^{(r)}\Big),\;1\le j\le n.\label{eq:-16}
\end{equation}
\end{defn}

\begin{rem}
For correlated diagonal dynamics, $S^{(i)}$ and $S^{(j)}$ interacts
with each other only via the correlation of the driving Brownian motions. 
\end{rem}

Denote
\[
R(s,w)=\big(R^{(1)}(s^{(1)},w^{(1)}),\dots,R^{(n)}(s^{(n)},w^{(n)})\big)=\Big(\frac{\partial_{s^{(1)}}L^{(1)}}{\partial_{x^{(1)}}L^{(1)}}(s^{(1)},w^{(1)}),\dots,\frac{\partial_{s^{(n)}}L^{(n)}}{\partial_{w^{(n)}}L^{(n)}}(s^{(n)},w^{(n)})\Big),
\]
and
\[
\begin{aligned}\partial_{w}\odot R & =\big(\partial_{w^{(1)}}R^{(1)},\dots,\partial_{w^{(n)}}R^{(n)}\big),\\
\partial_{s}\odot R & =\big(\partial_{s^{(1)}}R^{(1)},\dots,\partial_{s^{(n)}}R^{(n)}\big),\\
\partial_{w}\odot\partial_{w}\odot R & =\big(\partial_{w^{(1)}w^{(1)}}^{2}R^{(1)},\dots,\partial_{w^{(n)}w^{(n)}}^{2}R^{(n)}\big),\\
\partial_{s}\odot\partial_{w}\odot R & =\big(\partial_{s^{(1)}w^{(1)}}^{2}R^{(1)},\dots,\partial_{s^{(n)}w^{(n)}}^{2}R^{(n)}\big).
\end{aligned}
\]
It can be easily shown that
\begin{align}
\Theta^{(0)}(s,z,x) & =-(\partial_{w}\odot R)(s,\rho x),\label{eq:-22}\\
\Theta^{(1)}(s,z,x) & =\big[(z^{\text{T}}\rho^{-1})\mathrm{diag}(R)\big](s,\rho x),\label{eq:-23}
\end{align}
and
\begin{align}
\Lambda^{(0)}(s,z,x) & =\big[(\partial_{w}\odot R)^{\text{T}}(\partial_{w}\odot R)+\mathrm{diag}(\partial_{w}\odot\partial_{w}\odot R)\cdot\mathrm{diag}(R)-\mathrm{diag}(\partial_{s}\odot\partial_{w}\odot R)\big](s,\rho x),\label{eq:-24}\\
\Lambda^{(1)}(s,z,x) & =\big[\mathrm{diag}(z^{\text{T}}\rho^{-1})\mathrm{diag}(\partial_{s}\odot R)-\mathrm{diag}(z^{\text{T}}\rho^{-1})\mathrm{diag}(\partial_{x}\odot R)\mathrm{diag}(R)\nonumber \\
 & \quad-\mathrm{diag}(R)(z^{\text{T}}\rho^{-1})^{\text{T}}(\partial_{x}\odot R)-(\partial_{x}\odot R)^{\text{T}}(z^{\text{T}}\rho^{-1})\mathrm{diag}(R)\big](s,\rho x),\label{eq:-25}\\
\Lambda^{(2)}(s,z,x) & =\big[\mathrm{diag}(R)(z^{\text{T}}\rho^{-1})^{\text{T}}(z^{\text{T}}\rho^{-1})\mathrm{diag}(R)-\mathrm{diag}(R)(\rho^{-1})^{\text{T}}(\rho^{-1})\mathrm{diag}(R)\big](s,\rho x).\label{eq:-26}
\end{align}

\begin{example}
The local volatility model is an example of correlated diagonal dynamics.
Assuming zero risk-free rate\footnote{This assumption is not restrictive, since we may model the discounted
stock prices.}, it is given as the solution to the SDE
\[
\begin{aligned} & dS_{t}^{(j)}=\sigma^{(j)}(t,S_{t}^{(j)})dW_{t}^{(j)},\\
 & d\langle W^{(j)},W^{(k)}\rangle_{t}=\rho_{jk}dt,\;1\le j,k\le n,
\end{aligned}
\]
where $\sigma^{(j)}(t_{i},s^{(j)})$, $1\le j\le n$ are bounded functions
and twice differentiable in $s^{(j)}$. For the discredtisation
\[
S_{t_{i+1}}^{(j)}=S_{t_{i}}^{(j)}\cdot\exp\Big(-\frac{\sigma_{i}^{(j)}(S_{t_{i}}^{(j)})^{2}\Delta t_{i+1}}{2}+\sigma_{i}^{(j)}(S_{t_{i}}^{(k)})\sqrt{\Delta t_{i+1}}\sum_{k=1}^{n}\rho_{jk}Z_{i+1}^{(k)}\Big),
\]
where $\sigma_{i}^{(j)}(s^{(j)})=\sigma^{(j)}(t_{i},s^{(j)})$, by
simple calculation, the function $R(s,w)$ and its derivatives are
given by
\begin{equation}
\begin{aligned}R(s,w) & =c_{2}(s)+w^{\text{T}}\mathrm{diag}(c_{1}(s)),\\
(\partial_{w}\odot R)(s,w) & =c_{1}(s),\\
(\partial_{s}\odot R)(s,w) & =(\partial_{s}\odot c_{2})(s)+w^{\text{T}}\mathrm{diag}\big((\partial_{s}\odot c_{1})(s)\big),\\
(\partial_{s}\odot\partial_{w}\odot R)(s,w) & =(\partial_{s}\odot c_{1})(s),\\
(\partial_{w}\odot\partial_{w}\odot R)(s,w) & =0,
\end{aligned}
\label{eq:-55}
\end{equation}
where
\begin{equation}
\begin{aligned}c_{1}(s) & =\Big(\frac{\partial_{s^{(1)}}\sigma_{0}^{(1)}(s^{(1)})}{\sigma_{0}^{(1)}(s_{1})},\dots,\frac{\partial_{s^{(n)}}\sigma_{0}^{(n)}(s^{(n)})}{\sigma_{0}^{(n)}(s_{n})}\Big),\\
c_{2}(s) & =\Big(\frac{1}{s^{(1)}\sigma_{0}^{(1)}(s^{(1)})}-\Delta t_{1}\cdot\partial_{s^{(1)}}\sigma_{0}^{(1)}(s^{(1)}),\dots,\frac{1}{s^{(n)}\sigma_{0}^{(n)}(s^{(n)})}-\Delta t_{1}\cdot\partial_{s^{(n)}}\sigma_{0}^{(n)}(s^{(n)})\Big),\\
(\partial_{s}\odot c_{1})(s) & =\Big(\frac{\partial_{s^{(1)}}^{2}\sigma_{0}^{(1)}(s^{(1)})}{\sigma_{0}^{(1)}(s^{(1)})}-c_{1}^{(1)}(s^{(1)})^{2},\dots,\frac{\partial_{s^{(n)}}^{2}\sigma_{0}^{(n)}(s^{(n)})}{\sigma_{0}^{(n)}(s^{(n)})}-c_{1}^{(n)}(s^{(n)})^{2}\Big),\\
(\partial_{s}\odot c_{2})(s) & =\Big(-\frac{1+s^{(1)}c_{1}^{(1)}(s^{(1)})}{(s^{(1)})^{2}\sigma_{0}^{(1)}(s^{(1)})}-\Delta t_{1}\cdot\partial_{s^{(1)}}^{2}\sigma_{0}^{(1)}(s^{(1)}),\dots,-\frac{1+s^{(n)}c_{1}^{(n)}(s^{(n)})}{(s^{(n)})^{2}\sigma_{0}^{(n)}(s^{(n)})}-\Delta t_{1}\cdot\partial_{s^{(n)}}^{2}\sigma_{0}^{(n)}(s^{(n)})\Big).
\end{aligned}
\label{eq:-56}
\end{equation}
\end{example}

\section{\label{sec:}Variance Reduction for Small Time Step}

The formulae (\ref{eq:-19-1}) and (\ref{eq:-11}) usually suffer
from large variance compared to other methods such as finte difference
approximation. The large variances of (\ref{eq:-19-1}) and (\ref{eq:-11})
are mainly due to the appearance of factors $(\Delta t_{1})^{-1/2}$
and $(\Delta t_{1})^{-1}$ (though, as we shall see in Section \ref{sec:-2},
close-to-singular correlation matrix is another cause). The following
example shows that the variances of the estimator in formulae (\ref{eq:-19-1})
and (\ref{eq:-11}) are of orders $\mathrm{O}\big((\Delta t_{1})^{-1}\big)$
and $\mathrm{O}\big((\Delta t_{1})^{-2}\big)$ respectively as $\Delta t_{1}\to0$.
\begin{example}
\label{exa:}Suppose that the stock dynamics is given by the following
two-step scheme for the one-dimensional Black--Scholes model
\[
\begin{aligned}S_{t_{1}} & =s\cdot e^{-\frac{\sigma^{2}\Delta t_{1}}{2}+\sigma\sqrt{\Delta t_{1}}Z_{1}},\\
S_{T} & =S_{t_{1}}\cdot e^{-\frac{\sigma^{2}(T-\Delta t_{1})}{2}+\sigma\sqrt{T-\Delta t_{1}}Z_{2}}.
\end{aligned}
\]
Then
\[
\begin{aligned}H_{0}(s,x) & =s\cdot e^{-\frac{\sigma^{2}\Delta t_{1}}{2}+\sigma x},\\
J(s,x) & =\frac{1}{s\sigma}.
\end{aligned}
\]
Formula (\ref{eq:-19-1}) gives
\begin{equation}
\partial_{s}\mathbb{E}(F(S_{T}))=\frac{1}{s\sigma\sqrt{\Delta t_{1}}}\mathbb{E}\Big[F\big(se^{-\frac{\sigma^{2}T}{2}+\sigma\sqrt{\Delta t_{1}}Z_{1}+\sigma\sqrt{T-\Delta t_{1}}Z_{2}}\big)Z_{1}\Big].\label{eq:-35}
\end{equation}
Consider the simple case when $F(S_{T})=1$ is a constant. The random
variable
\[
\xi=F\big(se^{-\frac{\sigma^{2}T}{2}+\sigma\sqrt{\Delta t_{1}}Z_{1}+\sigma\sqrt{T-\Delta t_{1}}Z_{2}}\big)Z_{1}=\frac{Z_{1}}{s\sigma\sqrt{\Delta t_{1}}}
\]
has variance
\[
\mathrm{Var}(\xi)=\mathrm{O}\big((\Delta t_{1})^{-1}\big)
\]
as $\Delta t_{1}\to0$. Therefore, the variance of the MC estimator
for (\ref{eq:-35}) is of order $\text{O}((\Delta t_{1})^{-1}M^{-1/2})$,
where $M$ is the number of sample path.
\end{example}

In the rest of this section, we shall derive an adjusted formula to
reduce the variance of (\ref{eq:-19-1}) and (\ref{eq:-11}). 
\begin{defn}
We denote by $\hat{Z}$ the vector obtained by reflecting the first
element of $Z$, i.e.
\begin{equation}
\hat{Z}=(-Z_{1},Z_{2},\dots,Z_{N}),\label{eq:-19}
\end{equation}
and by $\bar{Z}$ the vector obtained by replacing the first element
of $Z$ by zero, i.e.
\begin{equation}
\bar{Z}=(0,Z_{2},\dots,Z_{N}).\label{eq:-57}
\end{equation}
Moreover, the process $\hat{S}_{t}$ and $\bar{S}_{t}$ are defined
to be generated by replacing $Z$ with $\hat{Z}$ and $\bar{Z}$ in
(\ref{eq:-1}) respectively, i.e. 
\begin{align}
\hat{S}_{t_{i+1}} & =H_{i}(\hat{S}_{t_{i}},\sqrt{\Delta t_{i+1}}\hat{Z}_{i+1}),\;\hat{S}_{t_{0}}=s,\label{eq:-58}\\
\bar{S}_{t_{i+1}} & =H_{i}(\bar{S}_{t_{i}},\sqrt{\Delta t_{i+1}}\bar{Z}_{i+1}),\;\bar{S}_{t_{0}}=s.\label{eq:-59}
\end{align}
\end{defn}

We may now formulate the variance reduction adjustment result. As
we shall see from the derivation of Theorem \ref{thm:}, for general
diffusion models such as the local volatility model, a crucial treatment
in the adjustment formula is that $\Theta^{(k)}$ and $\Lambda^{(k)}$
distinguish different appearances of the white noise $Z_{1}$.
\begin{thm}
\label{thm:}Formulae (\ref{eq:-19-1}) and (\ref{eq:-11}) admit
the equivalently adjusted forms 
\begin{equation}
\begin{aligned}\partial_{s^{(l)}}f(s) & =\mathbb{E}\Big[F(S)\Theta_{l}^{(0)}(s,Z_{1},\sqrt{\Delta t_{1}}Z_{1})\\
 & \quad+\frac{1}{2\sqrt{\Delta t_{1}}}\Big(F(S)\Theta_{l}^{(1)}(s,Z_{1},\sqrt{\Delta t_{1}}Z_{1})-F(\hat{S})\Theta_{l}^{(1)}(s,Z_{1},\sqrt{\Delta t_{1}}\hat{Z}_{1})\Big)\Big].
\end{aligned}
\label{eq:-28}
\end{equation}
and
\begin{equation}
\begin{aligned}\partial_{s^{(l)}s^{(m)}}^{2}f(s) & =\mathbb{E}\Big[F(S)\frac{1}{(\Delta t_{1})^{k/2}}\Lambda_{lm}^{(0)}(s,Z_{1},\sqrt{\Delta t_{1}}Z_{1})\\
 & \quad+\frac{1}{2\sqrt{\Delta t_{1}}}\Big(F(S)\Lambda_{lm}^{(1)}(s,Z_{1},\sqrt{\Delta t_{1}}Z_{1})-F(\hat{S})\Lambda_{lm}^{(1)}(s,Z_{1},\sqrt{\Delta t_{1}}\hat{Z}_{1})\Big)\\
 & \quad+\frac{1}{2\Delta t_{1}}\Big(F(S)\Lambda_{lm}^{(2)}(s,Z_{1},\sqrt{\Delta t_{1}}Z_{1})-2F(\bar{S})\Lambda_{lm}^{(2)}(s,Z_{1},\sqrt{\Delta t_{1}}\bar{Z}_{1})+F(\hat{S})\Lambda_{lm}^{(2)}(s,Z_{1},\sqrt{\Delta t_{1}}\hat{Z}_{1})\Big)\Big]
\end{aligned}
\label{eq:-29}
\end{equation}
\end{thm}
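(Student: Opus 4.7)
The plan is to show that, in expectation, the adjusted formulas (\ref{eq:-28}) and (\ref{eq:-29}) coincide with the original formulas (\ref{eq:-19-1}) and (\ref{eq:-11}) established in Theorem \ref{thm:-3}. Two ingredients drive the argument: the distributional symmetry $Z_1\stackrel{d}{=}-Z_1$ of the first Gaussian vector, and the parity of each weight in its middle argument $z$. Reading off (\ref{eq:-20})--(\ref{eq:-21}) and (\ref{eq:-5})--(\ref{eq:-13}), one sees that $\Theta^{(0)}$ and $\Lambda^{(0)}$ do not depend on $z$, that $\Theta^{(1)}$ and $\Lambda^{(1)}$ are linear (hence odd) in $z$, and that $\Lambda^{(2)}$ is quadratic (hence even) in $z$. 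Crucially, as flagged in the remark after Theorem \ref{thm:-3}, the arguments $z$ and $x$ are to be treated as independent dummy variables; this convention is exactly what allows the substitution $Z_1\mapsto -Z_1$ to produce a sign only in the $z$-slot while the $x$-slot is restored to $\sqrt{\Delta t_1}Z_1$.

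Applying this substitution inside every expectation featuring $\hat S$, and using that $\hat S(Z_1,Z_2,\dots)=S(-Z_1,Z_2,\dots)$ so that $\hat S\to S$, $Z_1\to -Z_1$, and $\sqrt{\Delta t_1}\hat Z_1\to \sqrt{\Delta t_1}Z_1$, together with the parity above, I obtain
\begin{align*}
\mathbb{E}\big[F(\hat S)\Theta^{(1)}_l(s,Z_1,\sqrt{\Delta t_1}\hat Z_1)\big] &= -\mathbb{E}\big[F(S)\Theta^{(1)}_l(s,Z_1,\sqrt{\Delta t_1}Z_1)\big],\\
\mathbb{E}\big[F(\hat S)\Lambda^{(1)}_{lm}(s,Z_1,\sqrt{\Delta t_1}\hat Z_1)\big] &= -\mathbb{E}\big[F(S)\Lambda^{(1)}_{lm}(s,Z_1,\sqrt{\Delta t_1}Z_1)\big],\\
\mathbb{E}\big[F(\hat S)\Lambda^{(2)}_{lm}(s,Z_1,\sqrt{\Delta t_1}\hat Z_1)\big] &= \phantom{-}\mathbb{E}\big[F(S)\Lambda^{(2)}_{lm}(s,Z_1,\sqrt{\Delta t_1}Z_1)\big].
\end{align*}
The $\bar S$-term in (\ref{eq:-29}) requires a separate observation: since $\bar Z_1=0$, the process $\bar S$ depends only on $Z_2,\dots,Z_N$ and is therefore independent of $Z_1$, while $\Lambda^{(2)}_{lm}(s,z,0)=\sum_{p\ne r}z^{(r)}z^{(p)}J_{rm}(s,0)J_{pl}(s,0)$ (the diagonal $p=r$ terms cancel via $-\delta_{rp}$ in (\ref{eq:-13})). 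Using $\mathbb{E}[Z_1^{(r)}Z_1^{(p)}]=\delta_{rp}$ annihilates this expression, so $\mathbb{E}[F(\bar S)\Lambda^{(2)}_{lm}(s,Z_1,\sqrt{\Delta t_1}\bar Z_1)]=0$.

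Substituting these four identities into the right-hand sides of (\ref{eq:-28}) and (\ref{eq:-29}) collapses the antisymmetric $\Theta^{(1)}$ block to $\frac{1}{2\sqrt{\Delta t_1}}\cdot 2\mathbb{E}[F(S)\Theta^{(1)}]=\frac{1}{\sqrt{\Delta t_1}}\mathbb{E}[F(S)\Theta^{(1)}]$, the $\Lambda^{(1)}$ block analogously, and the symmetric $\Lambda^{(2)}$ block to $\frac{1}{2\Delta t_1}(2\mathbb{E}[F(S)\Lambda^{(2)}]-0)=\frac{1}{\Delta t_1}\mathbb{E}[F(S)\Lambda^{(2)}]$; the $\Theta^{(0)}$ and $\Lambda^{(0)}$ pieces carry over unchanged. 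Summing the resulting terms reproduces (\ref{eq:-19-1}) and (\ref{eq:-11}), proving the theorem. There is no deep obstacle here, only bookkeeping: one must keep the $z$-slot and the $x$-slot of $\Theta^{(k)}$ and $\Lambda^{(k)}$ strictly distinct during the substitution, since only the $z$-slot flips sign. Were the two slots identified, the parity cancellation that produces the clean factors above would fail, and the adjusted estimators would no longer be unbiased.
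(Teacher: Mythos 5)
Your proof is correct and essentially reproduces the paper's argument: both exploit the distributional symmetry $Z_1 \stackrel{d}{=} -Z_1$, the parity of the weights in the $z$-slot ($\Theta^{(0)},\Lambda^{(0)}$ constant, $\Theta^{(1)},\Lambda^{(1)}$ odd, $\Lambda^{(2)}$ even), the independence of $\bar S$ from $Z_1$, and $\mathbb{E}(Z_1 Z_1^{\mathrm{T}})=I$; the paper symmetrises by averaging two equal expectations whereas you flip the sign directly and substitute, but these are the same computation. One cosmetic caveat: equation (\ref{eq:-13}) as typeset misplaces the $z^{(r)}z^{(p)}$ factor (compare the matrix form (\ref{eq:-17}) and the appendix derivation, which give $\Lambda^{(2)}_{lm}=\sum_{p,r}z^{(r)}z^{(p)}J_{rm}J_{pl}-\sum_{p}J_{pm}J_{pl}$), so your simplification $\Lambda^{(2)}_{lm}(s,z,0)=\sum_{p\ne r}z^{(r)}z^{(p)}J_{rm}J_{pl}$ is not the intended expression, though the conclusion $\mathbb{E}\big[F(\bar S)\Lambda^{(2)}(s,Z_1,\sqrt{\Delta t_1}\bar Z_1)\big]=0$ holds under either reading.
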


\begin{proof}
By the symmetricity of $Z_{1}$,
\[
\begin{aligned} & \mathbb{E}\Big(F(S)\Theta_{l}^{(1)}(s,Z_{1},\sqrt{\Delta t_{1}}Z_{1})\Big)\\
 & =\mathbb{E}\Big(F(\hat{S})\Theta_{l}^{(1)}(s,\hat{Z}_{1},\sqrt{\Delta t_{1}}\hat{Z}_{1})\Big)\\
 & =\frac{1}{2}\mathbb{E}\Big(F(S)\Theta_{l}^{(1)}(s,Z_{1},\sqrt{\Delta t_{1}}Z_{1})+F(\hat{S})\Theta_{l}^{(1)}(s,\hat{Z}_{1},\sqrt{\Delta t_{1}}\hat{Z}_{1})\Big).
\end{aligned}
\]
Note that $\Theta_{l}^{(1)}(s,\hat{Z}_{1},\sqrt{\Delta t_{1}}\hat{Z}_{1})=-\Theta_{l}^{(1)}(s,Z_{1},\sqrt{\Delta t_{1}}\hat{Z}_{1})$.
The above can be written as
\[
\mathbb{E}\Big(F(S)\Theta_{l}^{(1)}(s,Z_{1},\sqrt{\Delta t_{1}}Z_{1})\Big)=\frac{1}{2}\mathbb{E}\Big(F(S)\Theta_{l}^{(1)}(s,Z_{1},\sqrt{\Delta t_{1}}Z_{1})-F(\hat{S})\Theta_{l}^{(1)}(s,Z_{1},\sqrt{\Delta t_{1}}\hat{Z}_{1})\Big),
\]
which, together with formula (\ref{eq:-19-1}), proves (\ref{eq:-28})

Similarly, we have
\begin{equation}
\mathbb{E}\Big(F(S)\Lambda_{lm}^{(1)}(s,Z_{1},\sqrt{\Delta t_{1}}Z_{1})\Big)=\frac{1}{2}\mathbb{E}\Big(F(S)\Lambda_{lm}^{(1)}(s,Z_{1},\sqrt{\Delta t_{1}}Z_{1})-F(\hat{S})\Lambda_{lm}^{(1)}(s,Z_{1},\sqrt{\Delta t_{1}}\hat{Z}_{1})\Big).\label{eq:-32}
\end{equation}
Moreover, note that $\bar{Z}$ and $\bar{S}$ are independent of $Z_{1}$.
Therefore,
\begin{equation}
\begin{aligned}\mathbb{E}\Big(F(\bar{S})\Lambda^{(2)}(s,Z_{1},\sqrt{\Delta t_{1}}\bar{Z}_{1})\Big) & =\mathbb{E}\Big(F(\bar{S})\big[J(s,0)^{\text{T}}Z_{1}Z_{1}^{\text{T}}J(s,0)-J(0,s)^{\text{T}}J(0,s)\big]\Big)\\
 & =\mathbb{E}\big(F(\bar{S})\big)\cdot J(s,0)^{\text{T}}\cdot\big[\mathbb{E}\big(Z_{1}Z_{1}^{\text{T}}\big)-I\big]\cdot J(0,s)=0.
\end{aligned}
\label{eq:-30}
\end{equation}
By symmetrcity of $Z_{1}$ and the fact that $\Lambda_{lm}^{(2)}(s,z,x)$
is even in $z$, we have
\begin{equation}
\begin{aligned}\mathbb{E}\Big(F(S)\Lambda_{lm}^{(2)}(s,Z_{1},\sqrt{\Delta t_{1}}Z_{1})\Big) & =\mathbb{E}\Big(F(\hat{S})\Lambda_{lm}^{(2)}(s,\hat{Z}_{1},\sqrt{\Delta t_{1}}\hat{Z}_{1})\Big)=\mathbb{E}\Big(F(\hat{S})\Lambda_{lm}^{(2)}(s,Z_{1},\sqrt{\Delta t_{1}}\hat{Z}_{1})\Big).\end{aligned}
\label{eq:-31}
\end{equation}
Equations (\ref{eq:-30}) and (\ref{eq:-31}) imply
\begin{equation}
\begin{aligned}\mathbb{E}\Big(F(S)\Lambda_{lm}^{(2)}(s,Z_{1},\sqrt{\Delta t_{1}}Z_{1})\Big) & =\frac{1}{2}\mathbb{E}\Big(F(S)\Lambda_{lm}^{(2)}(s,Z_{1},\sqrt{\Delta t_{1}}Z_{1})\\
 & \quad-2F(\bar{S})\Lambda_{lm}^{(2)}(s,Z_{1},\sqrt{\Delta t_{1}}\bar{Z}_{1})+F(\hat{S})\Lambda_{lm}^{(2)}(s,Z_{1},\sqrt{\Delta t_{1}}\hat{Z}_{1})\Big).
\end{aligned}
\label{eq:-33}
\end{equation}
Equation (\ref{eq:-29}) follows immediately from (\ref{eq:-11}),
(\ref{eq:-32}) and (\ref{eq:-33}).
\end{proof}
\begin{example}
\label{exa:-1}Consider an autocallable payoff $F(S)$ on a single
asset with the following settings
\begin{itemize}
\item maturity: 1.0
\item knock-out times: $\Big(\frac{90}{360},\frac{180}{360},\frac{270}{360},\frac{360}{360}\Big)$;
\item knock-out coupons: $(0.1,0.1,0.1,0.1)$;
\item knock-out barriers: $(0.95,0.9,0.85,0.8)$;
\item knock-in barrier: $0.75$;
\item knock-in put strike: 1.
\end{itemize}
On each knock-out time $t$, if the stock performance $S_{t}/S_{t_{0}}$
is greater than or equal to the corresponding knock-out barrier, the
option is terminated and pays the corresponding knock-out coupon to
the option holder. If the option does not knock out on any knock-out
time, then it pays a European down-and-in put to the option holder.
We use the local volatility dynamics with the local volatility surface
given by the following parameterisation
\begin{itemize}
\item tenors: $\Big(\frac{5}{360},\frac{30}{360},\frac{90}{360},\frac{180}{360},\frac{360}{360},\frac{720}{360}\Big)$;
\item at-the-money volatilities: $(0.3,0.35,0.3,0.32,0.3,0.3)$;
\item skews: $(-0.05,-0.04,0.03,0.10,0.10,0.10)$;
\item kurtosises: $(0.02,0.02,0.05,0.10,0.10,0.10)$.
\end{itemize}
For SDE discretisation, we use $\Delta t_{1}=\frac{1}{360}$ for the
firt time step and $\Delta t_{i}=\frac{7}{360}$ for the rest. In
Figure \ref{fig:-1}, ``Delta{[}FD{]}'' refers to the delta computed
using finite difference approximaytion, ``Delta{[}PW\_raw{]}'' refers
to that computed using formulae (\ref{eq:-19-1}) and (\ref{eq:-11}),
and ``Delta{[}PW\_adj{]}'' refers to that computed using the variance
reduction adjustment in Theorem \ref{thm:}. The two grey curves represent
the 99\% confidence interval for the finite difference approximation.
The legends in the other graph have similar meanings.

Numerical test results for this example are generated by the test
case \texttt{test\_unadjusted\_pw\_variance} in \texttt{pwsen}.

\begin{figure}[H]
\centering{}%
\begin{tabular}{cc}
\includegraphics[scale=0.5]{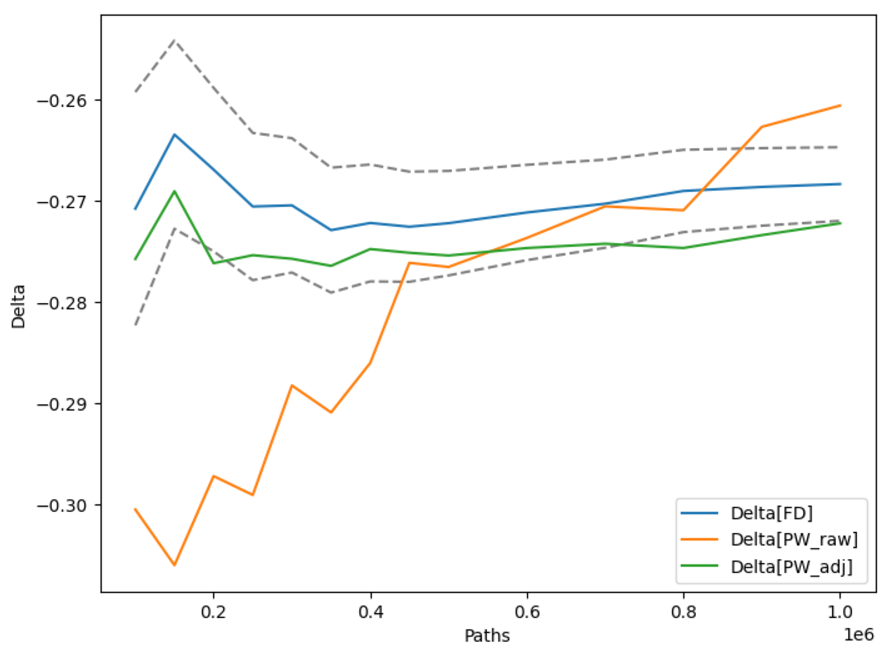} & \includegraphics[scale=0.5]{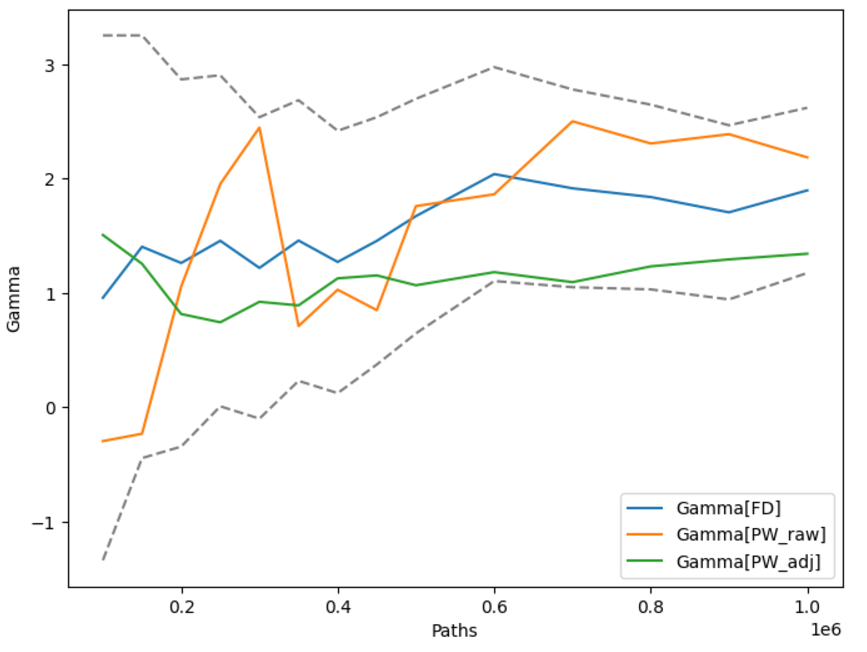}\tabularnewline
\end{tabular}\caption{Convergence Path}
\label{fig:-1}
\end{figure}
\end{example}

\begin{example}
To verify Theorem \ref{thm:} gives a correct estimator, we compare
the Delta and Gamma spot ladders computed using finite difference
approximation and Theorem \ref{thm:} (using 200k paths) for an autocallable
with the following set-up
\end{example}

\begin{itemize}
\item maturity: $1.0$;
\item number of assets: $2$;
\item knock-out times: $\Big(\frac{2}{360},\frac{180}{360},\frac{270}{360},\frac{360}{360}\Big)$;
\item knock-out coupons: $(0.1,0.1,0.1,0.1)$;
\item knock-out barriers: $(1.0,0.95,0.9,0.85)$;
\item knock-in barrier: $0.75$;
\item knock-in put strike: 1.
\end{itemize}
The parameterisation of volatility surfaces are using at-the-money
volatilities around $0.2$, skews varing from $-0.05$ to $0.05$,
and kurtosises varying from $0.02$ to $0.15$. Results shown in Firgure
\ref{fig:-6}, Figure \ref{fig:-7}, and Figure \ref{fig:-8} are
generated by the test case \texttt{test\_spot\_ladder} in \texttt{pwsen}.

\begin{figure}[H]
\begin{centering}
\includegraphics[scale=0.7]{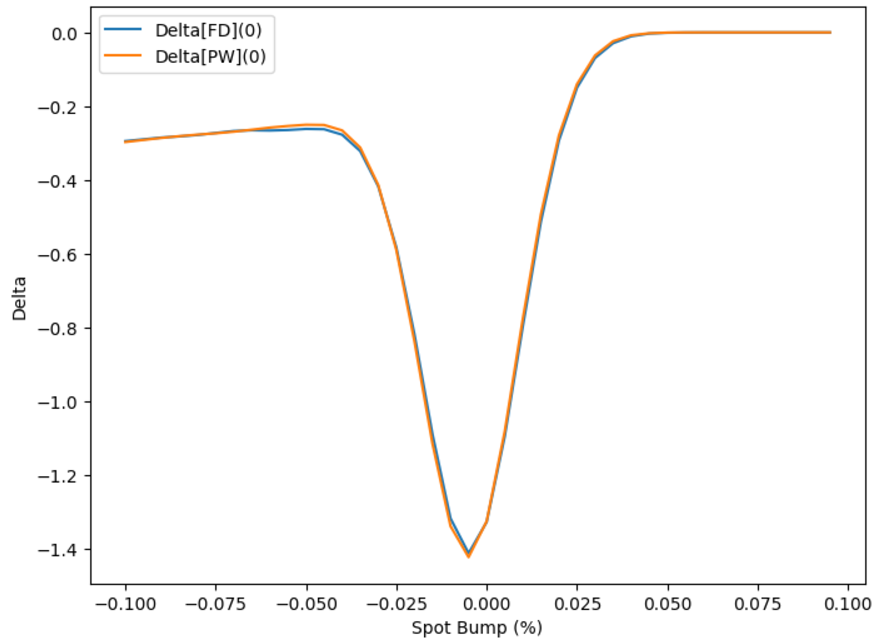}\caption{Delta Ladder}
\label{fig:-6}
\par\end{centering}
\end{figure}

\begin{figure}[H]
\centering{}\includegraphics[scale=0.7]{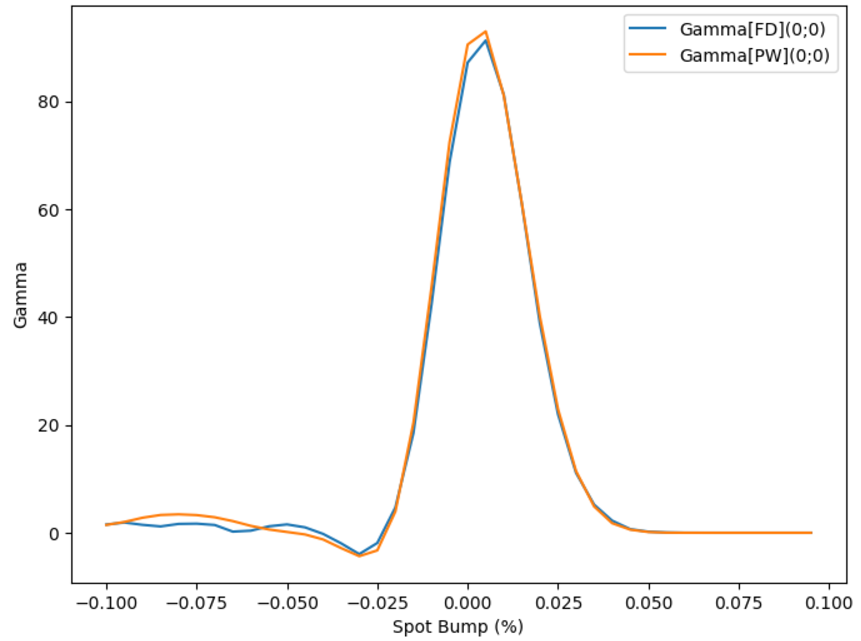}\caption{Diagonal Gamma Ladder}
\label{fig:-7}
\end{figure}
\begin{figure}[H]
\centering{}\includegraphics[scale=0.7]{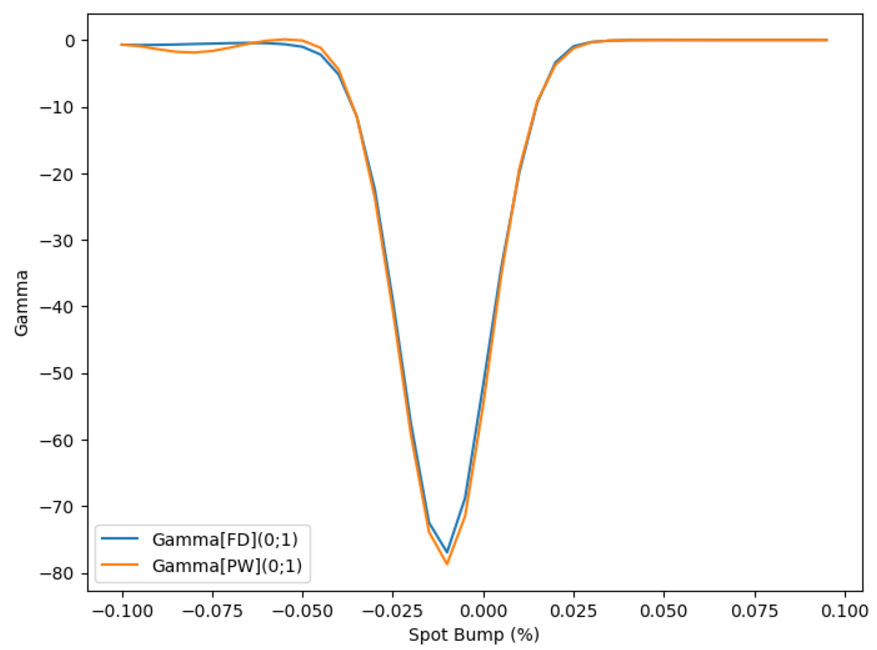}\caption{Cross Gamma Ladder}
\label{fig:-8}
\end{figure}

To derive asymptotic order of variance of the estimators in Theorem
\ref{thm:}, we will need the following regularity assumption. 
\begin{assumption}
\label{assu:}The functionals $H_{i}$ have continuous second order
derivatives with at most polynomial growth at infinity. And for any
$s$, there exists a polynomial $p_{s}(x)$ such that 
\[
\Vert J_{0}(s,x)\Vert_{l^{\infty}}=\Vert(\partial_{x}H_{0})^{-1}(\partial_{s}H_{0})\Vert_{l^{\infty}}\le|p_{s}(x)|,
\]
where $\Vert\cdot\Vert_{l^{\infty}}$ is the $l^{\infty}$-norm $\Vert A\Vert_{l^{\infty}}=\max_{i,j}|A_{ij}|$. 
\end{assumption}

\begin{rem}
Assumption \ref{assu:} is usually satisfied and can be easily verified.
The local volatility model is such an example.
\end{rem}

\begin{thm}
\label{thm:-1}If $F(S)$ has generalised derivatives with at most
polynomial growth a.e., then the random variables 
\begin{equation}
\xi=\frac{1}{\sqrt{\Delta t_{1}}}\Big(F(S)\Theta^{(1)}(s,Z_{1},\sqrt{\Delta t_{1}}Z_{1})-F(\hat{S})\Theta^{(1)}(s,Z_{1},\sqrt{\Delta t_{1}}\hat{Z}_{1})\Big),\label{eq:-43}
\end{equation}
\begin{equation}
\eta_{1}=\frac{1}{\sqrt{\Delta t_{1}}}\Big(F(S)\Lambda^{(1)}(s,Z_{1},\sqrt{\Delta t_{1}}Z_{1})-F(\hat{S})\Lambda^{(1)}(s,Z_{1},\sqrt{\Delta t_{1}}\hat{Z}_{1})\Big),\label{eq:-44}
\end{equation}
have variance of order
\begin{align}
\mathrm{Var}(\xi) & =\mathrm{O}(1),\;(\Delta t_{1}\to0),\label{eq:-36}\\
\mathrm{Var}(\eta_{1}) & =\mathrm{O}(1),\;(\Delta t_{1}\to0).\label{eq:-37}
\end{align}
If, in addition, $F(S)$ has second order generalised derivatives
with at most polynomial growth a.e., then the random variable
\begin{equation}
\eta_{2}=\frac{1}{\Delta t_{1}}\Big(F(S)\Lambda^{(2)}(s,Z_{1},\sqrt{\Delta t_{1}}Z_{1})-2F(\bar{S})\Lambda^{(2)}(s,Z_{1},\sqrt{\Delta t_{1}}\bar{Z}_{1})+F(\hat{S})\Lambda^{(2)}(s,Z_{1},\sqrt{\Delta t_{1}}\hat{Z}_{1})\Big)\label{eq:-45}
\end{equation}
has variance of order
\begin{equation}
\mathrm{Var}(\eta_{1})=\mathrm{O}(1),\;(\Delta t_{1}\to0),\label{eq:-38}
\end{equation}
\end{thm}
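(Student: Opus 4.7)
The plan is to Taylor-expand the time-$t_1$ randomness in powers of $\sqrt{\Delta t_1}$ and to exploit the antipodal symmetry $\hat Z_1=-Z_1$ together with the zeroing $\bar Z_1=0$ to force cancellations of the leading singular terms. All bounds are in $L^2$; remainders are controlled via the polynomial growth of $J$ (Assumption \ref{assu:}), the polynomial growth of the generalized derivatives of $F$, and Gaussian moment bounds on $Z_1$. The first step is to establish, uniformly in small $\Delta t_1$ and for every $k\ge 1$ and $p<\infty$,
\[
\|S_{t_k}-\hat S_{t_k}\|_{L^p}=O(\sqrt{\Delta t_1}),\quad \|S_{t_k}-\bar S_{t_k}\|_{L^p}=O(\sqrt{\Delta t_1}),\quad \|S_{t_k}-2\bar S_{t_k}+\hat S_{t_k}\|_{L^p}=O(\Delta t_1).
\]
At $k=1$ these follow from a second-order Taylor expansion of $H_0(s,\sqrt{\Delta t_1}z)$ at $z=0$; the third estimate is the even-part cancellation $H_0(s,x)+H_0(s,-x)-2H_0(s,0)=x^T\partial_{xx}^2 H_0(s,0)\,x+O(|x|^3)$. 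For $k\ge 2$, the three chains $(S,\bar S,\hat S)$ are driven by the same noise, so a discrete Gronwall argument together with first- and second-order Taylor expansions of $H_i$ ($i\ge 1$) in its $S$-argument propagates both orders without loss.

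For $\xi$ in (\ref{eq:-43}), the decomposition
\[
\xi=\frac{F(S)-F(\hat S)}{\sqrt{\Delta t_1}}\Theta^{(1)}(s,Z_1,\sqrt{\Delta t_1}Z_1)+F(\hat S)\,\frac{\Theta^{(1)}(s,Z_1,\sqrt{\Delta t_1}Z_1)-\Theta^{(1)}(s,Z_1,\sqrt{\Delta t_1}\hat Z_1)}{\sqrt{\Delta t_1}}
\]
reduces the claim to two $L^2$-estimates. The second summand uses the $z$-linearity $\Theta^{(1)}(s,z,x)=z^T J(s,x)$ and the oddness of $J(s,x)-J(s,-x)$ in $x$ to leading order, yielding $2\,Z_1^T(\partial_x J)(s,0)Z_1+O(\sqrt{\Delta t_1})$. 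The first summand uses the integral representation $F(S)-F(\hat S)=\int_0^1\nabla F(\hat S+u(S-\hat S))\cdot(S-\hat S)\,du$ (valid for $F$ with generalized first derivatives of polynomial growth, after a standard mollification argument), combined with $\|S-\hat S\|_{L^p}=O(\sqrt{\Delta t_1})$. The argument for $\eta_1$ in (\ref{eq:-44}) is identical with $\Lambda^{(1)}$ in place of $\Theta^{(1)}$, since $\Lambda^{(1)}(s,z,x)$ is also linear in $z$ and smooth in $x$.

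For $\eta_2$ in (\ref{eq:-45}), the decomposition that makes all three pieces of order $\Delta t_1$ is
\begin{align*}
\Delta t_1\,\eta_2 &= \bigl[F(S)-2F(\bar S)+F(\hat S)\bigr]\Lambda^{(2)}(s,Z_1,0) + F(S)\bigl[\Lambda^{(2)}(s,Z_1,\sqrt{\Delta t_1}Z_1)-\Lambda^{(2)}(s,Z_1,0)\bigr] \\
&\quad + F(\hat S)\bigl[\Lambda^{(2)}(s,Z_1,\sqrt{\Delta t_1}\hat Z_1)-\Lambda^{(2)}(s,Z_1,0)\bigr].
\end{align*}
A generalized second-order Taylor representation gives $F(S)-2F(\bar S)+F(\hat S)=\nabla F(\bar S)\cdot\bigl[(S-\bar S)+(\hat S-\bar S)\bigr]+O\bigl(|S-\bar S|^2+|\hat S-\bar S|^2\bigr)$, which is $O(\Delta t_1)$ in $L^2$ by the propagation estimates. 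Expanding $\Lambda^{(2)}$ to second order in its third argument, the odd-in-$x$ parts combine into $[F(S)-F(\hat S)]\,\sqrt{\Delta t_1}\,Z_1^T\partial_x\Lambda^{(2)}(s,Z_1,0)=O(\Delta t_1)$ (using $F(S)-F(\hat S)=O(\sqrt{\Delta t_1})$), while the even-in-$x$ parts carry an explicit $\Delta t_1$ prefactor. Division by $\Delta t_1$ leaves each summand $O(1)$ in $L^2$, so $\mathrm{Var}(\eta_2)=O(1)$.

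The main obstacle is making the Taylor-like identities rigorous when $F$ admits only generalized (Sobolev-type) derivatives: the pointwise Taylor expansions must be replaced by integral representations (obtained via convolution with a mollifier and passage to the limit), and Fubini is needed to interchange the $du$- and $d\mathbb{P}$-integrations. The polynomial-growth bounds on $\nabla F$ and $\nabla^2 F$ then have to be coupled with uniform-in-$\Delta t_1$ $L^p$-moment bounds on the interpolated processes $\hat S+u(S-\hat S)$ and $\bar S+u(S-\bar S)+v(\hat S-\bar S)$, which follow from Assumption \ref{assu:} and standard moment estimates for the Euler--Maruyama iterates.
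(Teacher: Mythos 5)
Your proposal is correct, and it rests on the same key observation as the paper's proof: since $\hat Z_1=-Z_1$ and $\bar Z_1=0$, the estimators $\xi$, $\eta_1$, $\eta_2$ are first and second divided differences in the $x_1$-slot of a fixed function of $(x_1,\dots,x_N)$, and such divided differences are controlled by first and second derivatives in $x_1$. Where you diverge is in the packaging. The paper absorbs the path weight into a single function --- e.g.\ $G(x_1,\dots,x_N)=\tilde F(x_1,\dots,x_N)\,Z_1^{\text{T}}J(s,x_1)$ with $Z_1$ frozen as a coefficient --- so that $\xi=\frac{1}{\sqrt{\Delta t_1}}\big[G(\sqrt{\Delta t_1}Z_1,\dots)-G(-\sqrt{\Delta t_1}Z_1,\dots)\big]$; a single fundamental-theorem-of-calculus line then gives the pointwise bound $|\xi|\le |Z_1|e^{C\sqrt{t_N}|Z|}$ and the Gaussian moment estimate closes the proof, with the analogous iterated-FTC argument for $\eta_2$ applied to $K=\tilde F\,J^{\text{T}}(Z_1Z_1^{\text{T}}-I)J$. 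You instead apply a Leibniz rule for divided differences, splitting the estimator into an $F$-difference piece and a weight-difference piece, and you introduce explicit $L^p$-propagation lemmas $\|S-\hat S\|_{L^p}=O(\sqrt{\Delta t_1})$ and $\|S-2\bar S+\hat S\|_{L^p}=O(\Delta t_1)$ via discrete Gronwall. This makes the mechanism by which the $t_1$-perturbation propagates through the recursion visible and is somewhat more elementary, at the cost of more moving parts (separate Taylor expansions of $\Theta^{(1)},\Lambda^{(1)},\Lambda^{(2)}$, a H\"older step, and the mollification needed to justify the integral representation of $F(S)-F(\hat S)$), whereas the paper subsumes all of this into the single claim that $\partial_{x_1}G$ (resp.\ $\partial^2_{x_1}K$) has at most exponential growth. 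One small imprecision: the phrase ``division by $\Delta t_1$ leaves each summand $O(1)$'' is not literally true for the second and third pieces of your $\eta_2$ decomposition taken individually --- each is only $O(\sqrt{\Delta t_1})$ before division --- but your subsequent remark that the odd-in-$x$ parts pair up with the factor $F(S)-F(\hat S)=O(\sqrt{\Delta t_1})$ correctly recovers the $O(\Delta t_1)$ estimate for their \emph{sum}, so the conclusion stands.
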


\begin{proof}
Clearly, we may write
\begin{equation}
F(S_{t_{1}},\dots,S_{t_{N}})=\tilde{F}(\sqrt{\Delta t_{1}}Z_{1},\dots,\sqrt{\Delta t_{N}}Z_{N}).\label{eq:-39}
\end{equation}
Since $\partial F$ has at most polynomial growth, the function
\[
G(x_{1},\dots,x_{N})=\tilde{F}(x_{1},\dots,x_{N})Z_{1}^{\text{T}}J(s,x_{1}),
\]
has generalised derivatives with at most exponential growth a.e..
Hence, there exists a constant $C>0$ such that 
\[
|\partial_{1}G(\lambda Z_{1},\sqrt{\Delta t_{2}}Z_{2},\dots,\sqrt{\Delta t_{N}}Z_{N})|\le e^{C\sqrt{t_{N}}|Z|},\;|\lambda|\le\sqrt{\Delta t_{1}}.
\]
Moreover,
\begin{align*}
|\xi| & =\frac{1}{\sqrt{\Delta t_{1}}}|G(\sqrt{\Delta t_{1}}Z_{1},\dots,\sqrt{\Delta t_{N}}Z_{N})-G(-\sqrt{\Delta t_{1}}Z_{1},\dots,\sqrt{\Delta t_{N}}Z_{N})\big]|\\
 & \le\frac{|Z_{1}|}{\sqrt{\Delta t_{1}}}\int_{-\sqrt{\Delta t_{1}}}^{\sqrt{\Delta t_{1}}}|\partial_{x_{1}}G(\lambda Z_{1},\sqrt{\Delta t_{2}}Z_{2},\dots,\sqrt{\Delta t_{N}}Z_{N})|d\lambda\\
 & \le|Z_{1}|e^{C\sqrt{t_{N}}|Z|}
\end{align*}
Therefore,
\[
\mathbb{E}(|\xi|^{2})\le\mathbb{E}\big[|Z_{1}|^{2}e^{2C\sqrt{t_{N}}|Z|}\big]=\text{O}(1),\;(\Delta t_{1}\to0).
\]

The proof of (\ref{eq:-37}) is similar to that of (\ref{eq:-36}).

Suppose that $F(S)$ has second order generalised derivatives with
at most polynomial growth a.e.. Using the notation (\ref{eq:-39})
again, the function
\[
K(x_{1},\dots,x_{N})=\tilde{F}(x_{1},\dots,x_{N})J(s,x)^{\text{T}}(Z_{1}Z_{1}^{\text{T}}-I)J(s,x)
\]
has generalised second order derivatives with at most exponential
growth a.e.. Therefore,
\begin{align*}
|\eta_{2}| & =\frac{1}{\Delta t_{1}}|K(\sqrt{\Delta t_{1}}Z_{1},\sqrt{\Delta t_{2}}Z_{2},\dots,\sqrt{\Delta t_{N}}Z_{N})-2K(0,\sqrt{\Delta t_{2}}Z_{2},\dots,\sqrt{\Delta t_{N}}Z_{N})\\
 & \quad+K(-\sqrt{\Delta t_{1}}Z_{1},\sqrt{\Delta t_{2}}Z_{2},\dots,\sqrt{\Delta t_{N}}Z_{N})|\\
 & \le\frac{|Z_{1}|^{2}}{\Delta t_{1}}\int_{0}^{\sqrt{\Delta t_{1}}}\int_{-\lambda_{1}}^{\lambda_{1}}\big|\partial_{x_{1}}^{2}K(\lambda_{2}Z_{1},\sqrt{\Delta t_{2}}Z_{2},\dots,\sqrt{\Delta t_{N}}Z_{N})\big|d\lambda_{2}d\lambda_{1}\\
 & \le|Z_{1}|^{2}e^{C\sqrt{t_{N}}|Z|}.
\end{align*}
Hence,
\[
\mathbb{E}(|\eta_{2}|^{2})\le\mathbb{E}\big[|Z_{1}|^{4}e^{2C\sqrt{t_{N}}|Z|}\big]=\text{O}(1),\;(\Delta t_{1}\to0).
\]
\end{proof}
\begin{rem}
\label{rem:}We would like to point out that the condition of $F$
having second order generalised derivatives is not a foundmental restriction
in practice. Indeed, one may apply payoff smoothing to satisfy this
condition, with the cost of small bias (also known as the overhedging).
\end{rem}

\begin{rem}
\label{rem:-1}Equations (\ref{eq:-23}), (\ref{eq:-25}), (\ref{eq:-26}),
and (\ref{eq:-56}) suggest large variance of the estimagtors in Theorem
\ref{thm:-1} when the covariance matrix is close to being singular,
even if the variance reduction adjustment is applied. We shall apply
the covariance inflation method, as a bias-variance tradeoff, to address
the situation when covariance matrix is close to singularity. See
Section \ref{sec:-2} below.
\end{rem}

It is interested to examine if the existence of second order generalised
derivatives of $F$ in Theorem (\ref{thm:-1}) is necessary. We show
below that for piecewise differentiable functions with finitely many
jumps, the variances of the MC estimators in (\ref{eq:-28}) and (\ref{eq:-29})
does increase to infinity, but with a smaller order, as $\Delta t_{1}\to0$.
\begin{defn}
\label{def:-1}A function $f(x),\;x\in\mathbb{R}^{d}$ is said to
have generalised derivatives in $L^{2}$, if for each $i$ there exists
a $C_{i}>0$ such that 
\[
\Big|\int_{\mathbb{R}^{d}}f(x)\partial_{i}g(x)dx\big]\vert\le C_{i}\Big(\int_{\mathbb{R}^{d}}|\partial_{i}g(x)|^{2}dx\Big)^{1/2}
\]
for any $g\in C^{\infty}(\mathbb{R}^{d})$ with compact support. By
the Riesz representation theorem, this is equivalent to the existence
of $\partial_{i}f\in L^{2}$ such that
\[
\int_{\mathbb{R}^{d}}f(x)\partial_{i}g(x)dx=-\int_{\mathbb{R}^{d}}\partial_{i}f(x)g(x)dx
\]
for any $g\in C^{\infty}(\mathbb{R}^{d})$ with compact support.
\end{defn}

\begin{defn}
\label{def:}A subset of discretisation times $\{t_{i_{1}},\dots,t_{i_{m}}\}\subset\{t_{1},\dots,t_{N}\}$
is called \emph{the set of event times of $F$ }if $F$ can be written
as
\[
F(S)=F(t_{i_{1}},\dots,t_{i_{m}}).
\]
\end{defn}

Lemma \ref{lem:} below gives the variance order as $\Delta t_{1}\to0$
for different regularity assumptions on the payoff. A proof of Lemma
\ref{lem:} is given in Appendix \ref{subsec:-4}. We should point
out that the assumption $t_{1}$ not in event times of a Bermudan
type payoff $F$ can be satisfied by inserting an extra simulation
time point. For American type payoff, one may first approximate the
payoff by a Bermudan version, then insert an extra simulation time
point.
\begin{lem}
\label{lem:}Suppose that the payoff function $F(S)$ does not contain
$t_{1}$ as its event time. 

(i) If $F(S)$ can be written as 
\[
F(S)=F^{s}(S)+\sum_{k=1}^{m}c_{k}\chi_{\{\phi_{k}(S_{t_{2}},\dots,S_{t_{N}})>0\}},
\]
where $F^{s}$ is a function which has second order generalised derivatives
with at most polynomial growth a.e., $c_{k}>0$ are the jump sizes,
and $\phi_{k}$ are smooth functions representing the hyperplane where
jumps occur\footnote{An example of this type of payoffs is barrier options.}.
Then the random variables in (\ref{eq:-43}), (\ref{eq:-44}), and
(\ref{eq:-45}) have variances
\begin{align}
\mathrm{Var}(\xi) & =\mathrm{O}\big((\Delta t_{1})^{-1/2}\big),\;(\Delta t_{1}\to0),\label{eq:-40}\\
\mathrm{Var}(\eta_{1}) & =\mathrm{O}\big((\Delta t_{1})^{-1/2}\big),\;(\Delta t_{1}\to0),\label{eq:-41}\\
\mathrm{Var}(\eta_{2}) & =\mathrm{O}\big((\Delta t_{1})^{-3/2}\big),\;(\Delta t_{1}\to0).\label{eq:-42}
\end{align}

(ii) If $F(S)$ can be written as
\[
F(S)=F^{s}(S)+F^{c}(S),
\]
where $F^{s}$ is a function which has second order generalised derivatives
with at most polynomial growth a.e., and $F^{c}$ is a functional
with first order generalised derivatives with at most polynomial growth
a.e. such that its derivatives can be written as
\[
\partial F^{c}(S)=\sum_{k=1}^{m}c_{k}^{\prime}\chi_{\{\phi_{k}(S_{t_{2}},\dots,S_{t_{N}})>0\}}.
\]
That is, the derivatives $\partial F$ have jumps\footnote{An example of this type of payoffs is vanilla options.}.
Then
\begin{align}
\mathrm{Var}(\xi) & =\mathrm{O}(1),\;(\Delta t_{1}\to0),\label{eq:-60}\\
\mathrm{Var}(\eta_{1}) & =\mathrm{O}(1),\;(\Delta t_{1}\to0),\label{eq:-61}\\
\mathrm{Var}(\eta_{2}) & =\mathrm{O}\big((\Delta t_{1})^{-1/2}\big),\;(\Delta t_{1}\to0).\label{eq:-46}
\end{align}
\end{lem}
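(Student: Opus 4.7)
The plan is to split the payoff as $F = F^s + \tilde F$, with $F^s$ having second-order generalised derivatives of at most polynomial growth almost everywhere and $\tilde F$ collecting the singular piece: $\tilde F = \sum_k c_k \chi_{\{\phi_k > 0\}}$ in part (i) and $\tilde F = F^c$ in part (ii). Theorem~\ref{thm:-1} applied to $F^s$ contributes only $\mathrm{O}(1)$ to each of $\mathrm{Var}(\xi)$, $\mathrm{Var}(\eta_1)$, $\mathrm{Var}(\eta_2)$, so the task reduces to bounding the contribution of $\tilde F$.

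Two technical inputs will be used throughout. \emph{Path stability:} induction on (\ref{eq:-14}) together with Assumption~\ref{assu:} yields a polynomial $p$ and a constant $C$ such that, for $j\ge 2$,
\[
|S_{t_j} - \hat S_{t_j}| + |S_{t_j} - \bar S_{t_j}| \le C \sqrt{\Delta t_1}\,|Z_1|\,p(|Z|);
\]
combined with the smoothness of $J$, the first-order differences of $\Theta^{(k)}(s,Z_1,\cdot)$ and $\Lambda^{(k)}(s,Z_1,\cdot)$ at $\sqrt{\Delta t_1}Z_1$ versus $-\sqrt{\Delta t_1}Z_1$ gain an extra factor $\sqrt{\Delta t_1}$, and the corresponding second differences at $\{\sqrt{\Delta t_1}Z_1,0,-\sqrt{\Delta t_1}Z_1\}$ gain $\Delta t_1$. \emph{Flip probability:} because $t_1$ is not an event time of $F$, each $\phi_k$ depends only on $(S_{t_2},\dots,S_{t_N})$; conditionally on $(Z_2,\dots,Z_N)$, the law of $\phi_k(S_{t_2},\dots,S_{t_N})$ as a function of $Z_1$ has a polynomially weighted bounded density near $0$, and combining with path stability yields, for every polynomial $q$ and every event $E$ among $\{\tilde F(S)\neq\tilde F(\hat S)\}$, $\{\tilde F(S)\neq\tilde F(\bar S)\}$, and $\{\tilde F(S),\tilde F(\bar S),\tilde F(\hat S)\text{ not all equal}\}$, the estimate $\mathbb{E}[\chi_E\,q(|Z|)] \le C_q\sqrt{\Delta t_1}$.

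In part (i), I would decompose each bracket via
\[
F^d(S)\Psi_S - F^d(\hat S)\Psi_{\hat S} = [F^d(S)-F^d(\hat S)]\Psi_S + F^d(\hat S)[\Psi_S - \Psi_{\hat S}]
\]
for $\Psi\in\{\Theta^{(1)},\Lambda^{(1)}\}$, and the analogous three-term identity for $\Lambda^{(2)}$. The first piece is supported on a flip event and pointwise bounded by a polynomial in $|Z|$; the second piece is controlled by the path-stability Taylor bounds. Squaring, taking expectations, inserting the flip-probability estimate, and dividing by $\sqrt{\Delta t_1}$ or $\Delta t_1$ as in (\ref{eq:-43})--(\ref{eq:-45}) produces exactly the orders (\ref{eq:-40})--(\ref{eq:-42}). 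In part (ii), the global Lipschitz property of $F^c$ gives $|F^c(S)-F^c(\hat S)|\le C\sqrt{\Delta t_1}|Z_1|p(|Z|)$ directly, so $\mathrm{Var}(\xi)=\mathrm{Var}(\eta_1)=\mathrm{O}(1)$ follows without any flip argument; for $\eta_2$ I would partition according to the flip event, using a second-order Taylor expansion of $F^c$ on its complement (which produces an $\mathrm{O}(\Delta t_1)$ bracket) and the Lipschitz bound on the flip event itself (of order $\sqrt{\Delta t_1}$ but occurring with probability $\mathrm{O}(\sqrt{\Delta t_1})$), to produce (\ref{eq:-46}).

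The main obstacle is making the flip-probability bound rigorous. It requires a polynomially weighted estimate on the conditional density of $\phi_k(S_{t_2},\dots,S_{t_N})$ near $0$, viewed as a function of a single component of the driving Gaussian vector used as a change of variable; this reduces to the smoothness of the $H_i$ and the non-degeneracy of $\partial_x H_i$ coming from Assumption~\ref{assu:}. Once this estimate is in place, all remaining steps are Taylor expansions, Cauchy--Schwarz, and standard Gaussian moment bounds.
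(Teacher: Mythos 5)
Your proposal is essentially the same strategy the paper uses: decompose $F$ into a smooth part handled by Theorem~\ref{thm:-1} plus a singular part, bound path differences by Taylor/path-stability, and estimate the flip probability of the indicator to get the extra $\sqrt{\Delta t_1}$ factor. The quantitative bookkeeping (flip piece divided by $\sqrt{\Delta t_1}$ or $\Delta t_1$, squared, multiplied by flip probability $\mathrm{O}(\sqrt{\Delta t_1})$) matches the paper's and yields exactly (\ref{eq:-40})--(\ref{eq:-42}) and (\ref{eq:-46}).

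The main organizational difference is in how the flip probability is established. The paper reduces to the model case $F=\chi_{\{S_{t_2}\in D\}}$, conditions on $Z_1$, and shows the conditional flip probability $p(\sqrt{\Delta t_1}Z_1)$ is a $C^1$ function of its argument vanishing at $0$, hence $p(x)\le C|x|$; then it integrates against $|Z_1|^k$ with an explicit Gaussian tail split at $|Z_1|=1/\sqrt{\Delta t_1}$. You instead condition on $(Z_2,\dots,Z_N)$ and require that $\phi_k(\bar S_{t_2},\dots,\bar S_{t_N})$ has a bounded density near $0$. Both routes are viable and rely on the same underlying nondegeneracy/smoothness of the $H_i$, but the paper's reduction to a single set $D$ and its use of the inverse map $I(s,\cdot)$ of $x\mapsto H_1(s,x)$ makes the differentiability of the conditional probability very explicit; your density statement is more general but would need the same care, and you are right to flag it as the main remaining obstacle. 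Two small points worth tightening if you develop this further: (1) your flip-probability statement $\mathbb{E}[\chi_E q(|Z|)]\le C_q\sqrt{\Delta t_1}$ involves a polynomial weight in the full vector $Z$, whereas only a polynomial in $Z_1$ appears after your product-rule split, and the proof is cleaner if you keep the weight in $Z_1$ only and condition on $Z_1$ as the paper does; (2) for part (ii), your claim that the bracket on the complement of the flip event is $\mathrm{O}(\Delta t_1)$ rests on $F^c$ being affine between consecutive jump hyperplanes together with a second-order Taylor bound on $S-2\bar S+\hat S$ — this is fine, but note that the paper instead writes the bracket through $\partial_{x_1}K$ and applies H\"older plus a first-order flip estimate on the derivative, which is essentially the same flip argument in integral form.

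Also note the paper's written reduction step appears to contain a typo (it states $F(S)\Theta^{(1)}(s,Z_1,\sqrt{\Delta t_1}Z_1)-F(\hat S)\Theta^{(1)}(s,Z_1,0)$ has $\mathrm{O}(\Delta t_1)$ variance, which should read $F(S)\Theta^{(1)}(s,Z_1,\sqrt{\Delta t_1}Z_1)-F(S)\Theta^{(1)}(s,Z_1,0)$); your two-term product-rule decomposition sidesteps this and is cleaner.
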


\begin{example}
\label{exa:-2}In this example, we verify the order of variance established
in Theorem \ref{thm:-1} and Lemma \ref{lem:}. Consider a one-year
autocallable payoff $F_{0}$ with quarterly up-and-out redemption
and European down-and-in put at maturity. Let $F_{1}$ be the first-order-smoothed
payoff, for which the barrier hit conditions $\chi_{\{R\ge B\}}$and
$\chi_{\{R\le B\}}$ are replaced with
\[
\frac{1}{2}+\frac{1}{2}\sin\Big(\frac{\pi}{2}\cdot\frac{R-B-b}{|b|}\Big),
\]
and
\[
\frac{1}{2}-\frac{1}{2}\sin\Big(\frac{\pi}{2}\cdot\frac{R-B-b}{|b|}\Big),
\]
respectively, where $R$ is the basket performance, $B$ is the corresponding
barrier, and $b$ is a small perturbation to $B$. Moreover, let $F_{2}$
be the second-order-smoothed payoff, for which the barrier hit conditions
are smoothed in the same way as $F_{1}$, and the put option intrinsic
value $(K-R)^{+}$ is replaced with
\[
\left\{ \begin{aligned}\frac{1}{4}\frac{(K+k-R)^{2}}{k}, & \;\text{if}\;|P-K|<k,\\
(K-R)^{+}, & \;\text{otherwise},
\end{aligned}
\right.
\]
where $K$ is the put strike, and $k>0$ is a small perturbation to
$K$. For this example, the simulation time grid is constructed such
that $t_{2},\dots,t_{N}$ are fixed, while $t_{1}$ is variable. Figure
\ref{fig:-2} and Figure \ref{fig:-3} show the $\log$-$\log$ plots
of standard deviation versus $\Delta t_{1}$ for Delta and Gamma,
where ``PW0'', ``PW1'', and ``PW2'' refer to the results with
respect to $F_{0}$, $F_{1}$, and $F_{2}$ respectively. Figure \ref{fig:-4}
and Figure \ref{fig:-5} are zoomed-in versions of Figure \ref{fig:-2}
and Figure \ref{fig:-3} to give a clearer comparison of ``PW1''
and ``PW2''.

However, we should also point out that when the first time step is
not too small (such as $1/360$), payoff smoothing only has limited
variance reduction effect for path weighting method, since this case
is at the left end of Figure \ref{fig:-2} and Figure \ref{fig:-3}.
This is also confirmed in Example \ref{exa:-3} below. 

Numerical test results for this example are generated by the test
case \texttt{test\_variance\_for\_small\_first\_time\_step} in \texttt{pwsen}.

\begin{figure}[H]

\centering{}\includegraphics[scale=0.7]{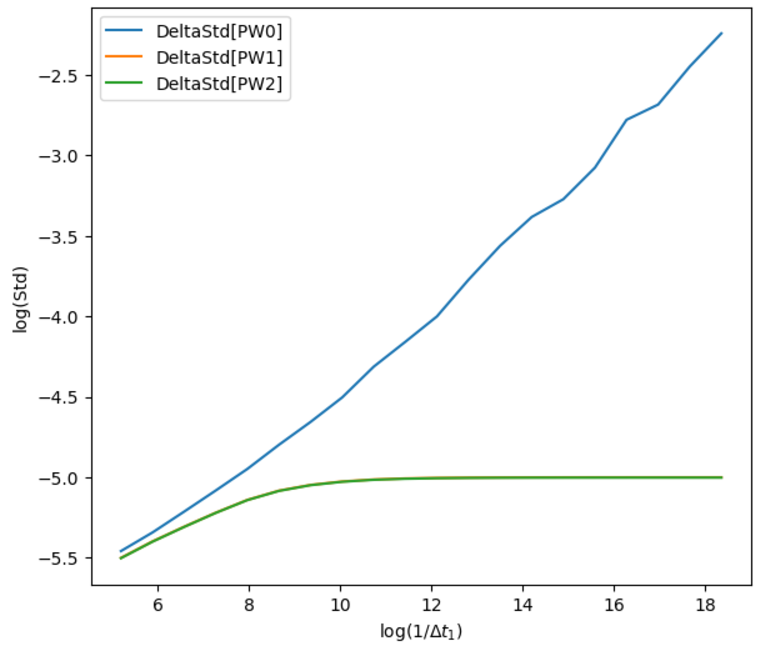}\caption{Standard Deviation of Delta versus $\log(1/\Delta t_{1})$}
\label{fig:-2}
\end{figure}

\begin{figure}[H]
\centering{}\includegraphics[scale=0.7]{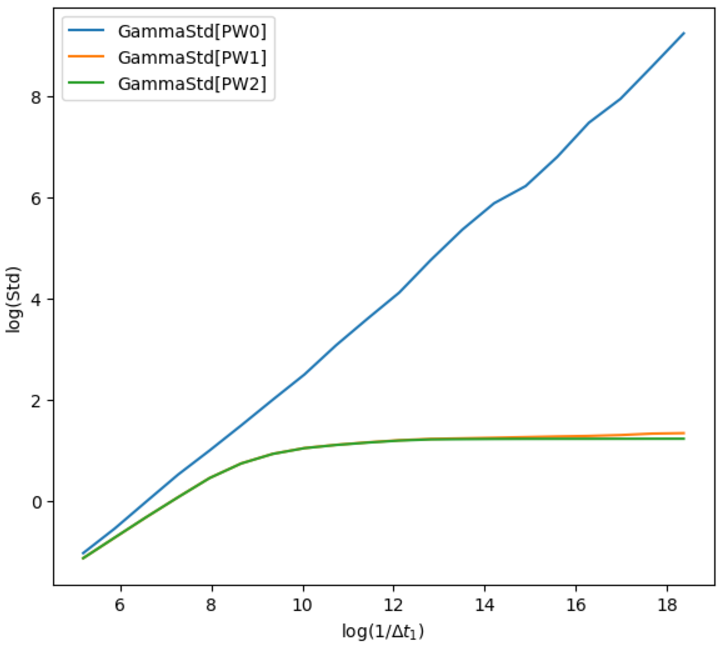}\caption{Standard Deviation of Gamma versus $\log(1/\Delta t_{1})$}
\label{fig:-3}
\end{figure}

\begin{figure}[H]
\centering{}\includegraphics[scale=0.7]{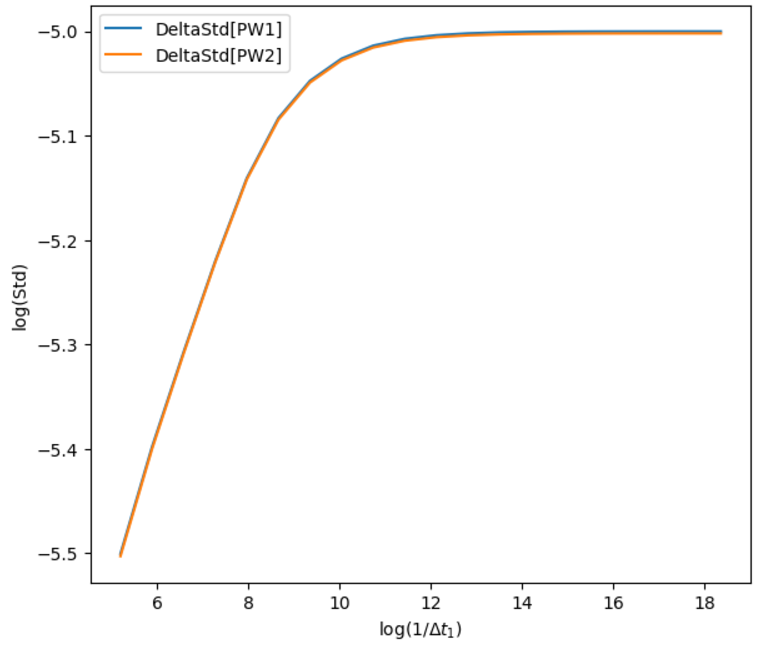}\caption{Standard Deviation of Delta versus $\log(1/\Delta t_{1})$}
\label{fig:-4}
\end{figure}
\begin{figure}[H]
\centering{}\includegraphics[scale=0.7]{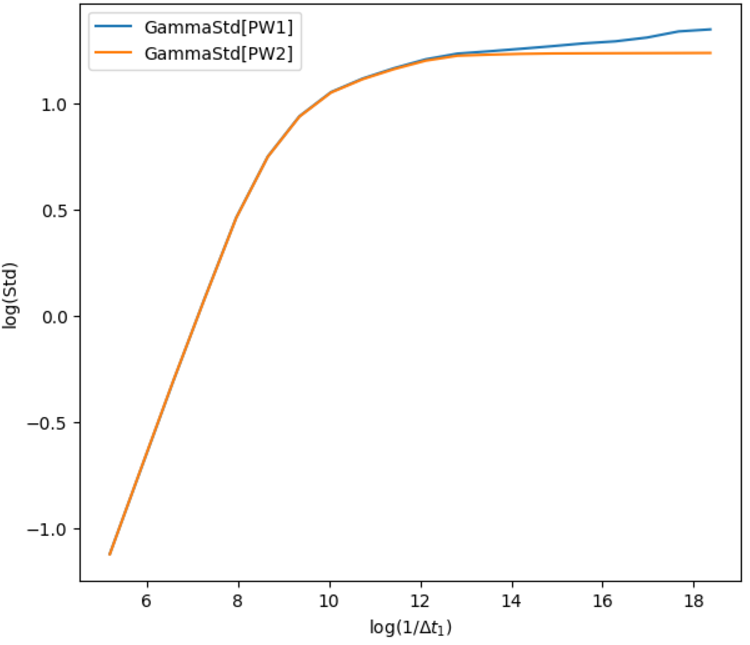}\caption{Standard Deviation of Gamma versus $\log(1/\Delta t_{1})$}
\label{fig:-5}
\end{figure}
\end{example}

\begin{example}
\label{exa:-3}Table \ref{tab:-1} shows the standard deviations of
Delta and Gamma computed by finite difference (FD), raw path weighting
(RPW), and adjusted path weighting (PW) under various volatility and
correlation levels. The payoffs being tested are a one-year autocallable
with basket size $2$ and a smoothed version of the same autocallable.
The MC simulation uses 500k paths for each method, and uses first
time step $\Delta t_{1}=1/360$ and $\Delta t_{i}=7/360$ for the
rest. The columns with ``(PS)'' in Table \ref{tab:-1} refer the
results for test cases where payoff smoothing is applied. For finite
difference, the spot bump being used is the expected one day spot
percentage change $\sigma/\sqrt{360}$. 

As shown by Table \ref{tab:-1}, for Delta of unsmoothed payoff, PW
has slightly higher standard deviation than FD for most of the cases,
while the difference becomes more significant when correlation is
close to $1$ (similar result holds for correlation $-1$). This is
due to the appearance of the inverse correlation matrix in the path
weighting formulae. For all cases, there is huge variance reduction
from RPW to PW. The variance reduction effect of payoff smoothing
is less significant for PW than for FD, especially when volatility
is small.

As shown by Table \ref{tab:-2}, for Gamma of unsmoothed payoff, PW
has smaller standard deviation than FD for most of the cases, while
PW has larger standard deviation than FD when correlation is close
to $1$ (similar result holds for correlation $-1$). For all cases,
there is huge variance reduction from RPW to PW. Payoff smoothing
is much less effective for PW than for FD, just as pointed out in
Example \ref{exa:-2}.

Numerical test results in this example are generated by the test case
\texttt{test\_variance\_reduction} in \texttt{pwsen}.

\begin{table}[H]
\begin{centering}
\begin{tabular}{|c|c|c|c|c|c|c|c|}
\hline 
\textbf{Volatility} & \textbf{Correlation} & \textbf{FD} & \textbf{RPW} & \textbf{PW} & \textbf{FD (PS)} & \textbf{RPW (PS)} & \textbf{PW (PS)}\tabularnewline
\hline 
\hline 
0.05 & 0.0 & 0.0003  & 0.0537 & 0.0003 & 9e-05 & 0.0536 & 0.0001\tabularnewline
\hline 
0.05 & 0.3 & 0.0003 & 0.0563 & 0.0003 & 9e-05 & 0.0562 & 0.0001\tabularnewline
\hline 
0.05 & 0.5 & 0.0003 & 0.0620 & 0.0003 & 9e-05 & 0.0619 & 0.0001\tabularnewline
\hline 
0.05 & 0.7 & 0.0003 & 0.0752 & 0.0004 & 8e-05 & 0.0751 & 0.0002\tabularnewline
\hline 
0.05 & 0.95 & 0.0002 & 0.1720 & 0.0007 & 8e-05 & 0.1719 & 0.0003\tabularnewline
\hline 
0.1 & 0.0 & 0.0013 & 0.0298 & 0.0016 & 0.0008 & 0.0292 & 0.0013\tabularnewline
\hline 
0.1 & 0.3 & 0.0013 & 0.0310 & 0.0016 & 0.0008 & 0.0305 & 0.0013\tabularnewline
\hline 
0.1 & 0.5 & 0.0012 & 0.03402 & 0.0016 & 0.0007 & 0.0335 & 0.0013\tabularnewline
\hline 
0.1 & 0.7 & 0.0012 & 0.0409 & 0.0018 & 0.0007 & 0.0403 & 0.0015\tabularnewline
\hline 
0.1 & 0.95 & 0.0011 & 0.0918 & 0.0033 & 0.0006 & 0.0908 & 0.0026\tabularnewline
\hline 
0.2 & 0.0 & 0.0019 & 0.0229 & 0.0022 & 0.0014 & 0.0224 & 0.0021\tabularnewline
\hline 
0.2 & 0.3 & 0.0018 & 0.0234 & 0.0022 & 0.0014 & 0.0229 & 0.0021 \tabularnewline
\hline 
0.2 & 0.5 & 0.0018 & 0.0252  & 0.0023 & 0.0013 & 0.0247 & 0.0021\tabularnewline
\hline 
0.2 & 0.7 & 0.0017  & 0.0296 & 0.0025 & 0.0013 & 0.0290 & 0.0024\tabularnewline
\hline 
0.2 & 0.95 & 0.0016 & 0.0630 & 0.0050 & 0.0012 & 0.0619 & 0.0045\tabularnewline
\hline 
0.3 & 0.0 & 0.0020  & 0.0212 & 0.0024 & 0.0017 & 0.0209 & 0.0023\tabularnewline
\hline 
0.3 & 0.3 & 0.0020  & 0.0215 & 0.0024 & 0.0016 & 0.0212 & 0.0022\tabularnewline
\hline 
0.3 & 0.5 & 0.0019 & 0.0229 & 0.0025 & 0.0016 & 0.0226 & 0.0023\tabularnewline
\hline 
0.3 & 0.7 & 0.0018 & 0.0267 & 0.0028 & 0.0015 & 0.0263 & 0.0026\tabularnewline
\hline 
0.3 & 0.95 & 0.0017 & 0.0556 & 0.0054 & 0.0014 & 0.0548 & 0.0051\tabularnewline
\hline 
0.5 & 0.0 & 0.0021 & 0.0193 & 0.0023 & 0.0018 & 0.0192 & 0.0023\tabularnewline
\hline 
0.5 & 0.3 & 0.0020 & 0.0194 & 0.0023 & 0.0018 & 0.0192 & 0.0023\tabularnewline
\hline 
0.5 & 0.5 & 0.0019 & 0.0206 & 0.0024 & 0.0018 & 0.0204 & 0.0024\tabularnewline
\hline 
0.5 & 0.7 & 0.0019  & 0.0239 & 0.0028 & 0.0017 & 0.0237 & 0.0027\tabularnewline
\hline 
0.5 & 0.95 & 0.0018 & 0.0492 & 0.0055 & 0.0016 & 0.0487 & 0.0053\tabularnewline
\hline 
\end{tabular}
\par\end{centering}
\caption{Delta Standard Deviation Comparison }
\label{tab:-1}

\end{table}
\begin{table}[H]
\begin{centering}
\begin{tabular}{|c|c|c|c|c|c|c|c|}
\hline 
\textbf{Volatility} & \textbf{Correlation} & \textbf{FD} & \textbf{RPW} & \textbf{PW} & \textbf{FD (PS)} & \textbf{RPW (PS)} & \textbf{PW (PS)}\tabularnewline
\hline 
\hline 
0.05 & 0.0 & 0.4511 & 28.82 & 0.1873 & 0.0139 & 28.81 & 0.0403\tabularnewline
\hline 
0.05 & 0.3 & 0.4199  & 31.66 & 0.1720 & 0.0141 & 31.65 & 0.0295\tabularnewline
\hline 
0.05 & 0.5 & 0.3851 & 38.39 & 0.2119 & 0.0133 & 38.38 & 0.0462\tabularnewline
\hline 
0.05 & 0.7 & 0.4191 & 56.44 & 0.2879 & 0.0127 & 56.42 & 0.0700\tabularnewline
\hline 
0.05 & 0.95 & 0.3473 & 295.3 & 1.129 & 0.0149 & 295.2 & 0.1521\tabularnewline
\hline 
0.1 & 0.0 & 0.9658 & 7.982 & 0.4118 & 0.1075 & 7.835 & 0.2442\tabularnewline
\hline 
0.1 & 0.3 & 0.9408 & 8.733 & 0.4280 & 0.1038 & 8.594 & 0.2588\tabularnewline
\hline 
0.1 & 0.5 & 0.9074 & 10.53  & 0.4708 & 0.1023 & 10.37 & 0.2783\tabularnewline
\hline 
0.1 & 0.7 & 0.8716 & 15.37 & 0.6161 & 0.0997 & 15.15 & 0.3295\tabularnewline
\hline 
0.1 & 0.95 & 0.7738 & 78.87 & 2.537 & 0.0986 & 77.95 & 1.067\tabularnewline
\hline 
0.2 & 0.0 & 0.6888 & 3.078 & 0.2925 & 0.1959 & 3.016 & 0.2411 \tabularnewline
\hline 
0.2 & 0.3 & 0.6764 & 3.299 & 0.3063 & 0.1880 & 3.235 & 0.2459\tabularnewline
\hline 
0.2 & 0.5 & 0.6575 & 3.908 & 0.3452 & 0.1819 & 3.829 & 0.2775\tabularnewline
\hline 
0.2 & 0.7 & 0.6261 & 5.547 & 0.4685 & 0.1744 & 5.444 & 0.3522\tabularnewline
\hline 
0.2 & 0.95 & 0.5804  & 26.95 & 1.927 & 0.1619 & 26.51 & 1.268\tabularnewline
\hline 
0.3 & 0.0 & 0.4979 & 1.902 & 0.2097 & 0.2215 & 1.876 & 0.1825\tabularnewline
\hline 
0.3 & 0.3 & 0.4831  & 2.022 & 0.2167 & 0.2132 & 1.994 & 0.1867\tabularnewline
\hline 
0.3 & 0.5 & 0.4789 & 2.375 & 0.2460 & 0.2089 & 2.342 & 0.2091\tabularnewline
\hline 
0.3 & 0.7 & 0.4542 & 3.345 & 0.3290 & 0.2014 & 3.298 & 0.2780\tabularnewline
\hline 
0.3 & 0.95 & 0.4265 & 15.86  & 1.433 & 0.1880 & 15.63 & 1.085\tabularnewline
\hline 
0.5 & 0.0 & 0.3121 & 1.040 & 0.1265 & 0.2045 & 1.031 & 0.1173\tabularnewline
\hline 
0.5 & 0.3 & 0.2965 & 1.096 & 0.1312 & 0.1971 & 1.087 & 0.1190\tabularnewline
\hline 
0.5 & 0.5 & 0.2906 & 1.282 & 0.1492 & 0.1948 & 1.271 & 0.1356\tabularnewline
\hline 
0.5 & 0.7 & 0.2864 & 1.797 & 0.2060 & 0.1864 & 1.781 & 0.1809\tabularnewline
\hline 
0.5 & 0.95 & 0.2658  & 8.436 & 0.8833 & 0.1757 & 8.345 & 0.7533\tabularnewline
\hline 
\end{tabular}
\par\end{centering}
\caption{Gamma Standard Deviation Comparison}
\label{tab:-2}
\end{table}
\end{example}

\begin{example}
\label{exa:-4}Figure \ref{fig:-9} shows the calculation time ratio
of full Delta vector and full Gamma matrix using finite difference
approximation (FD) and path weighting (PW), as a function of basket
size. The MC simulation uses 100k paths for each test case. In Figure
\ref{fig:-9}, ``DeltaTimeRatio'' refers to the calculation time
of Delta using FD divided by that using PW, and ``GammaTimeRatio''
referes to the corresponding time ratio for Gamma.

Numerical results in this example are generated by the test case \texttt{test\_calculation\_time}
in \texttt{pwsen}.

\begin{figure}[H]

\begin{centering}
\includegraphics[scale=0.7]{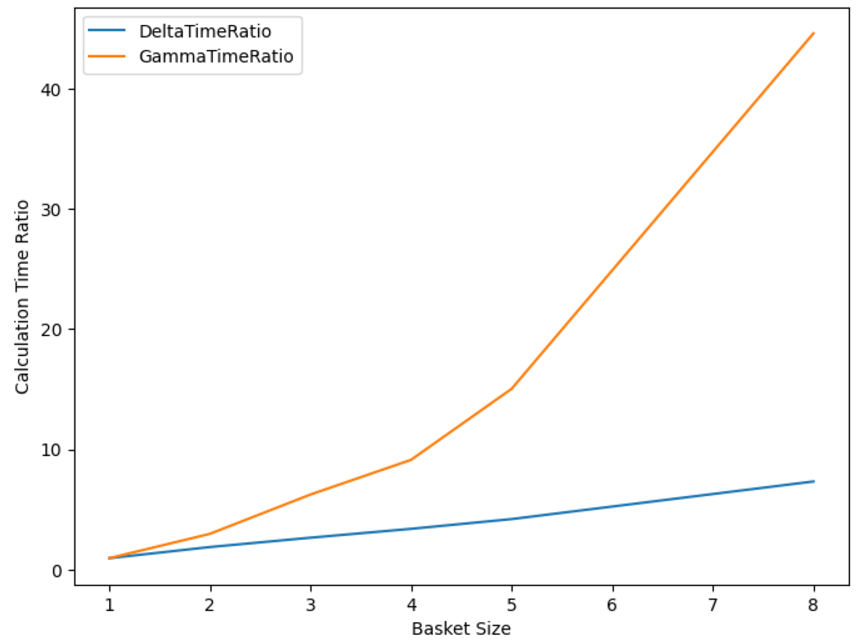}\caption{Calculation Time Ratio}
\label{fig:-9}
\par\end{centering}
\end{figure}
\end{example}

\section{\label{sec:-2}Covaraince Inflation: A Bias--Variance Tradeoff}

In this section, we address the degenerate case when the covariance
matrix is singular (either volatility is zero or correlation matrix
does not have full rank), for which, as pointed out in Remark \ref{rem:-1},
the formulae (\ref{eq:-28}) and (\ref{eq:-29}) become singular.
A natural approach would be to reduce to a subset of stocks such that
the covariance matrix is non-singular. However, such approach lacks
of generality in terms of implementation. It is also prone to large
variance when the covariance is not singular but close to being singular.
The approach we adopt here is to apply inflation to the covariance
so that it is away from singularity. Thereby, the variance of the
estimator is reduced significantly, with the cost of small bias introduced. 

The following theorem gives the order of bias introduced by perturbing
the driver noise $Z_{1}$ at $t_{1}$ only. A proof of Theorem \ref{thm:-2}
is given in Appendix \ref{subsec:-5}.
\begin{thm}
\label{thm:-2}Let $S_{t}$ and $\tilde{S}_{t}$ be the processes
defined by 
\begin{equation}
\begin{aligned}S_{t_{i+1}} & =H_{i+1}(S_{t_{i}},\sqrt{\Delta t_{i+1}}Z_{i+1}),\;i\ge1,\\
S_{t_{1}} & =H_{1}(s,\sqrt{\Delta t_{1}}Z_{1}),\\
S_{t_{0}} & =s,
\end{aligned}
\label{eq:-54}
\end{equation}
and
\begin{equation}
\begin{aligned}\tilde{S}_{t_{i+1}} & =H_{i+1}(\tilde{S}_{t_{i}},\sqrt{\Delta t_{i+1}}Z_{i+1}),\;i\ge1,\\
\tilde{S}_{t_{1}} & =H_{1}(s,\sqrt{\Delta t_{1}}(Z_{1}+\epsilon Y)),\\
\tilde{S}_{t_{0}} & =s,
\end{aligned}
\label{eq:-53}
\end{equation}
where $Y$ is an $\mathbb{R}^{n}$-valued random variable such that
$Y$ is independent of $Z_{2},\dots,Z_{N}$, and
\[
\mathbb{E}(Y)=0,
\]
and
\[
\mathbb{E}(e^{\lambda|Y|})<\infty
\]
for any $\lambda>0$. Let $F(S)=F(S_{t_{2}},\dots,S_{t_{N}})$ be
a payoff function which does not contain $t_{1}$ as an event date.
Then
\[
\big|\mathbb{E}(F(S_{t_{2}},\dots,S_{t_{N}}))-\mathbb{E}(F(\tilde{S}_{t_{2}},\dots,\tilde{S}_{t_{N}}))\big|\le C\epsilon\Delta t_{1},
\]
where $C>0$ is a constant depending on $F,$ $H$, and $Y$.
\end{thm}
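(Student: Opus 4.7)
The plan is to exploit the Markov structure of the simulation together with the zero-mean and independence of the perturbation $Y$, via a second-order Taylor expansion in which the $\sqrt{\Delta t_{1}}$ prefactor inside $H_{1}$ will produce the claimed $\Delta t_{1}$ scaling.

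First I would reduce to a one-step comparison. Since $Y$ is independent of $Z_{2},\dots,Z_{N}$ and $F$ depends only on $S_{t_{2}},\dots,S_{t_{N}}$, the Markov property gives $\mathbb{E}[F(S)]=\mathbb{E}[g(S_{t_{1}})]$ and $\mathbb{E}[F(\tilde{S})]=\mathbb{E}[g(\tilde{S}_{t_{1}})]$, where
$$
g(y):=\mathbb{E}\big[F\big(S^{y}_{t_{2}},\dots,S^{y}_{t_{N}}\big)\big],
$$
and $S^{y}$ denotes the Markov chain started at $y$ at time $t_{1}$ and driven by $Z_{2},\dots,Z_{N}$. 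It then remains to estimate $\mathbb{E}[g(\tilde{S}_{t_{1}})-g(S_{t_{1}})]$. A preliminary step, which I expect to be the main technical ingredient, is to verify that $g\in C^{2}(\mathbb{R}^{n})$ with derivatives of at most polynomial growth: because $t_{1}$ is not an event time in the sense of Definition \ref{def:}, the Gaussian expectation over $Z_{2}$ mollifies the (possibly discontinuous) $F$, and one can then differentiate under the integral sign iteratively using the smoothness and polynomial-growth hypotheses on $H_{2},\dots,H_{N}$.

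Next I would Taylor expand the composition $h(x):=g(H_{1}(s,x))$ around $x=\sqrt{\Delta t_{1}}\,Z_{1}$ with increment $\sqrt{\Delta t_{1}}\,\epsilon Y$. Writing out the integral remainder,
$$
h\big(\sqrt{\Delta t_{1}}(Z_{1}+\epsilon Y)\big)-h\big(\sqrt{\Delta t_{1}}\,Z_{1}\big)=\sqrt{\Delta t_{1}}\,\epsilon\,Y^{\mathrm{T}}\nabla h\big(\sqrt{\Delta t_{1}}\,Z_{1}\big)+\Delta t_{1}\,\epsilon^{2}\int_{0}^{1}(1-\lambda)\,Y^{\mathrm{T}}\nabla^{2}h\big(\sqrt{\Delta t_{1}}(Z_{1}+\lambda\epsilon Y)\big)\,Y\,d\lambda.
$$
Taking expectation and using that $Y$ is independent of $Z_{1}$ with $\mathbb{E}[Y]=0$, the linear term vanishes:
$$
\sqrt{\Delta t_{1}}\,\epsilon\,\mathbb{E}\big[Y^{\mathrm{T}}\nabla h(\sqrt{\Delta t_{1}}\,Z_{1})\big]=\sqrt{\Delta t_{1}}\,\epsilon\,\mathbb{E}[Y]^{\mathrm{T}}\,\mathbb{E}\big[\nabla h(\sqrt{\Delta t_{1}}\,Z_{1})\big]=0,
$$
so only the quadratic remainder survives, and it carries an explicit factor of $\Delta t_{1}$. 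This cancellation is the whole point of the construction: without $\mathbb{E}[Y]=0$ one would only get an $\mathrm{O}(\epsilon\sqrt{\Delta t_{1}})$ bound.

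Finally, to bound the remainder I would combine the chain-rule expression for $\nabla^{2}h$ with the $C^{2}$-bound from the preliminary step to control $\|\nabla^{2}h(x)\|$ by a polynomial in $|x|$ (uniformly for $\Delta t_{1}$ in a bounded range), and then use the exponential moment hypothesis $\mathbb{E}[e^{\lambda|Y|}]<\infty$ to absorb the polynomial weight in $|Z_{1}+\lambda\epsilon Y|$. This yields
$$
\big|\mathbb{E}[g(\tilde{S}_{t_{1}})]-\mathbb{E}[g(S_{t_{1}})]\big|\le C\,\epsilon^{2}\,\Delta t_{1},
$$
and since in the intended covariance-inflation regime $\epsilon\le 1$, one has $\epsilon^{2}\le\epsilon$ and the announced bound $C\epsilon\,\Delta t_{1}$ follows. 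To reiterate, the hard part is the preliminary smoothness and polynomial-growth estimate on $g$ and its derivatives; this is where the no-event-at-$t_{1}$ hypothesis plays its essential role, as it allows the Gaussian increment $Z_{2}$ to smooth $F$ before differentiation is attempted.
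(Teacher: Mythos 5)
Your overall strategy coincides with the paper's: reduce to a one-step comparison $\mathbb{E}[g(\sqrt{\Delta t_1}Z_1)]$ versus $\mathbb{E}[g(\sqrt{\Delta t_1}(Z_1+\epsilon Y))]$ via the Markov property, obtain second-derivative control on $g$ by applying the Gamma path-weighting formula to the process conditioned on $S_{t_1}$, and then Taylor expand with the zero-mean of $Y$ cancelling the linear term. However, there is a genuine gap in the cancellation step: you write that the linear term vanishes ``using that $Y$ is independent of $Z_1$,'' but the theorem only assumes $Y$ independent of $Z_2,\dots,Z_N$, not of $Z_1$. Without independence from $Z_1$, the factorization $\mathbb{E}[Y^{\mathrm{T}}\nabla h(\sqrt{\Delta t_1}Z_1)]=\mathbb{E}[Y]^{\mathrm{T}}\,\mathbb{E}[\nabla h(\sqrt{\Delta t_1}Z_1)]$ is invalid, and the linear term does not vanish in general.

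The paper's Lemma \ref{lem:-1} is designed precisely to avoid this hypothesis: it assumes only $\mathbb{E}(X)=\mathbb{E}(\tilde{X})=x_0$ and handles the first-order term by centering $\nabla g$ at $x_0$,
\[
\mathbb{E}\big[\partial_x g(X)(\tilde{X}-X)\big]
=\mathbb{E}\big[\partial_x g(x_0)(\tilde{X}-X)\big]
+\mathbb{E}\big[(\partial_x g(X)-\partial_x g(x_0))(\tilde{X}-X)\big],
\]
where the first summand vanishes since $\partial_x g(x_0)$ is deterministic and $\mathbb{E}[\tilde{X}-X]=0$, and the second is bounded by $\kappa\cdot|X-x_0||\tilde{X}-X|$. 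This produces the additional contribution $\epsilon\Delta t_1|Z_1||Y|$, which is why the paper's final bound is $C\epsilon\Delta t_1$ rather than your sharper-looking $C\epsilon^2\Delta t_1$; the latter is only available under the extra independence hypothesis. A secondary, more minor point: the paper controls $|\partial_x^2 g|$ by $C_1 e^{C_2|x|}$ (exponential growth, obtained by applying formula (\ref{eq:-11}) conditionally on $S_{t_1}$), not polynomial growth as you suggest; your use of the exponential moment hypothesis $\mathbb{E}[e^{\lambda|Y|}]<\infty$ is then what actually closes the estimate, so your sketch still goes through once the growth rate is stated correctly. To repair the proof so that it covers the theorem as stated, replace your direct expectation of the Taylor expansion with the centered decomposition from Lemma \ref{lem:-1}.
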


\begin{rem}
Recall that the MC bias introduced by SDE discretisation is $\text{O}(\max_{i}\Delta t_{i})$.
Theorem \ref{thm:-2} shows that the bias introduced by perturbing
the driver noise at $t_{1}$is just of the same order.
\end{rem}

\begin{defn}
\label{def:-2}For any $\epsilon_{1}>0$ and $\epsilon_{2}>0$, the
\emph{covariance inflated process} $S_{t}^{\epsilon_{1},\epsilon_{2}}$
is defined to be the process given by 
\begin{equation}
\begin{aligned}S_{t_{i+1}}^{\epsilon_{1},\epsilon_{2}} & =H_{i}(S_{t_{i}}^{\epsilon_{1},\epsilon_{2}},\sqrt{\Delta t_{i+1}}Z_{i+1}),\;i\ge1,\\
S_{t_{1}}^{\epsilon_{1},\epsilon_{2}} & =H_{1}^{\epsilon_{1},\epsilon_{2}}(s,\sqrt{\Delta t_{1}}Z_{1}),
\end{aligned}
\label{eq:-52}
\end{equation}
where the functional $H_{1}^{\epsilon_{1},\epsilon_{2}}$ is obtained
by replacing the volatility curve (for the time step $[t_{0},t_{1})$
only) $\sigma_{0}^{(k)}(s^{(k)})=\sigma^{(k)}(t_{0},s^{(k)})$ by
\begin{equation}
\sigma_{0,\epsilon_{1}}^{(k)}(t,s^{(k)})=\sigma_{0}^{(k)}(S_{t_{0}}^{(k)})+\epsilon_{1},\label{eq:-50}
\end{equation}
and replacing the correlation matrix $\Sigma=Q\Lambda^{2}Q^{\text{T}}$
(for the time step $[t_{0},t_{1})$ only) by 
\begin{equation}
\Sigma_{\epsilon_{2}}=\epsilon_{2}I+(1-\epsilon_{2})\Sigma.\label{eq:-51}
\end{equation}
\end{defn}

\begin{rem}
By Theorem \ref{thm:-2}, the bias introduced by covariance inflation,
i.e. replacing the process $S_{t}$ with $S_{t}^{\epsilon_{1},\epsilon_{2}}$,
is bounded by $C(\epsilon_{1}+\epsilon_{2})\Delta t_{1}$.
\end{rem}

\begin{rem}
It is possible to combine Theorem \ref{thm:-1} and Theorem \ref{thm:-2}
to derive the optimal $\epsilon_{1}$ and $\epsilon_{2}$ which minimise
the mean square error of (\ref{eq:-28}) and (\ref{eq:-29}). Such
formula for the optimal $\epsilon_{1},\epsilon_{2}$ is not very useful
in practice. Because the constants in the error bounds depend on the
payoff function and are usually unknown. 

Our approach to address close-to-singularity covariance matrices (either
correlation is close to being singular or volatility is close to zero)
is to insert an extra small simulation time point ($1/360$ for example),
and then apply Theorem \ref{thm:} to the covariance inflated process
$S_{t}^{\epsilon_{1},\epsilon_{2}}$, by which we sacrifice the property
of being unbiased for the ability to handle singular covariance matrix
and also a smaller MC variance.

For tests in Example \ref{exa:-5} below, we use the following covariance
inflation
\begin{align*}
\epsilon_{1} & =0.01,\\
\epsilon_{2} & =0.5\cdot e^{-10\bar{\lambda}},
\end{align*}
where
\begin{align*}
\bar{\lambda} & =\min_{1\le k\le n}|\Lambda_{kk}|,
\end{align*}
with $\Lambda_{kk}$ being the $(k,k)$-element of the diagoanl matrix
$\Lambda$ in the eigenvalue decomposition $\Sigma=Q\Lambda^{2}Q^{\text{T}}$
of the correlation matrix. Table \ref{tab:} shows the value of coefficient
$\epsilon_{2}$ for various correlation $\rho$ for 2-dimension case

\begin{table}[H]
\begin{centering}
\begin{tabular}{|c|c|}
\hline 
\textbf{Correlation ${\bf \rho}$} & \textbf{Coefficient ${\bf \epsilon_{2}}$}\tabularnewline
\hline 
\hline 
0.0 & 0.0\tabularnewline
\hline 
0.3 & 1e-4\tabularnewline
\hline 
0.5 & 4e-4\tabularnewline
\hline 
0.7 & 0.002\tabularnewline
\hline 
0.8 & 0.005\tabularnewline
\hline 
0.9 & 0.02\tabularnewline
\hline 
0.95 & 0.05\tabularnewline
\hline 
1.0 & 0.5\tabularnewline
\hline 
\end{tabular}\caption{Correlation Inflation Coefficient}
\label{tab:}
\par\end{centering}
\end{table}
\end{rem}

\begin{example}
\label{exa:-5}As a test on the effect of covariance inflation, Table
\ref{tab:-3} and Table \ref{tab:-4} show the results for the same
test as in Example \ref{exa:-3}, with an additional method path weighting
with covariance inflation (PWCI) added. Moreover, Figure \ref{fig:-10}
and Figure \ref{fig:-11} show the convergence paths for the two extreme
cases where correlation is $0.95$ and volatility is $0.05$ and $0.5$
respectively.

Numerical test results in this example are generated by the test case
\texttt{test\_variance\_reduction} in \texttt{pwsen}.

\begin{table}[H]
\begin{centering}
\begin{tabular}{|c|c|c|c|c|}
\hline 
\textbf{Volatility} & \textbf{Correlation} & \textbf{FD} & \textbf{PW} & \textbf{PWCI}\tabularnewline
\hline 
\hline 
0.05 & 0.0 & 0.0003 & 0.0003 & 0.0002\tabularnewline
\hline 
0.05 & 0.3 & 0.0003 & 0.0003 & 0.0003\tabularnewline
\hline 
0.05 & 0.5 & 0.0003 & 0.0003 & 0.0003\tabularnewline
\hline 
0.05 & 0.7 & 0.0003 & 0.0004 & 0.0003\tabularnewline
\hline 
0.05 & 0.95 & 0.0002 & 0.0007 & 0.0003\tabularnewline
\hline 
0.1 & 0.0 & 0.0013 & 0.0016 & 0.0015\tabularnewline
\hline 
0.1 & 0.3 & 0.0013 & 0.0016 & 0.0015\tabularnewline
\hline 
0.1 & 0.5 & 0.0012 & 0.0016 & 0.0015 \tabularnewline
\hline 
0.1 & 0.7 & 0.0012 & 0.0018 & 0.0016\tabularnewline
\hline 
0.1 & 0.95 & 0.0011 & 0.0033 & 0.0014\tabularnewline
\hline 
0.2 & 0.0 & 0.0019 & 0.0022 & 0.0021\tabularnewline
\hline 
0.2 & 0.3 & 0.0018 & 0.0022 & 0.0021\tabularnewline
\hline 
0.2 & 0.5 & 0.0018 & 0.0023 & 0.0022\tabularnewline
\hline 
0.2 & 0.7 & 0.0017 & 0.0025 & 0.0024\tabularnewline
\hline 
0.2 & 0.95 & 0.0016 & 0.0050 & 0.0021\tabularnewline
\hline 
0.3 & 0.0 & 0.0020 & 0.0024 & 0.0023\tabularnewline
\hline 
0.3 & 0.3 & 0.0020 & 0.0024 & 0.0023\tabularnewline
\hline 
0.3 & 0.5 & 0.0019 & 0.0025 & 0.0024\tabularnewline
\hline 
0.3 & 0.7 & 0.0018 & 0.0028 & 0.0026\tabularnewline
\hline 
0.3 & 0.95 & 0.0017 & 0.0054 & 0.0024\tabularnewline
\hline 
0.5 & 0.0 & 0.0021 & 0.0023 & 0.0023\tabularnewline
\hline 
0.5 & 0.3 & 0.0020 & 0.0023 & 0.0023\tabularnewline
\hline 
0.5 & 0.5 & 0.0019 & 0.0024 & 0.0024\tabularnewline
\hline 
0.5 & 0.7 & 0.0019 & 0.0028 & 0.0027\tabularnewline
\hline 
0.5 & 0.95 & 0.0018 & 0.0055 & 0.0025\tabularnewline
\hline 
\end{tabular}
\par\end{centering}
\caption{Delta Standard Deviation Comparison}
\label{tab:-3}
\end{table}
\begin{table}[H]
\begin{centering}
\begin{tabular}{|c|c|c|c|c|}
\hline 
\textbf{Volatility} & \textbf{Correlation} & \textbf{FD} & \textbf{PW} & \textbf{PWCI}\tabularnewline
\hline 
\hline 
0.05 & 0.0 & 0.4511 & 0.1873 & 0.1367\tabularnewline
\hline 
0.05 & 0.3 & 0.4199 & 0.1720 & 0.1407\tabularnewline
\hline 
0.05 & 0.5 & 0.3851 & 0.2119 & 0.1518 \tabularnewline
\hline 
0.05 & 0.7 & 0.4191 & 0.2879 & 0.2079\tabularnewline
\hline 
0.05 & 0.95 & 0.3473 & 1.129 & 0.1690\tabularnewline
\hline 
0.1 & 0.0 & 0.9658 & 0.4118 & 0.3458\tabularnewline
\hline 
0.1 & 0.3 & 0.9408 & 0.4280 & 0.3647\tabularnewline
\hline 
0.1 & 0.5 & 0.9074 & 0.4708 & 0.4016\tabularnewline
\hline 
0.1 & 0.7 & 0.8716 & 0.6161 & 0.5044\tabularnewline
\hline 
0.1 & 0.95 & 0.7738 & 2.537 & 0.4387\tabularnewline
\hline 
0.2 & 0.0 & 0.6888 & 0.2925 & 0.2694\tabularnewline
\hline 
0.2 & 0.3 & 0.6764 & 0.3063 & 0.2791\tabularnewline
\hline 
0.2 & 0.5 & 0.6575 & 0.3452 & 0.3132\tabularnewline
\hline 
0.2 & 0.7 & 0.6261 & 0.4685 & 0.4149\tabularnewline
\hline 
0.2 & 0.95 & 0.5804 & 1.927 & 0.3566\tabularnewline
\hline 
0.3 & 0.0 & 0.4979 & 0.2097 & 0.1974\tabularnewline
\hline 
0.3 & 0.3 & 0.4831 & 0.2167 & 0.2043\tabularnewline
\hline 
0.3 & 0.5 & 0.4789 & 0.2460 & 0.2308\tabularnewline
\hline 
0.3 & 0.7 & 0.4542 & 0.3290 & 0.2986\tabularnewline
\hline 
0.3 & 0.95 & 0.4265 & 1.433 & 0.2714\tabularnewline
\hline 
0.5 & 0.0 & 0.3121 & 0.1265 & 0.1220\tabularnewline
\hline 
0.5 & 0.3 & 0.2965 & 0.1312 & 0.1263\tabularnewline
\hline 
0.5 & 0.5 & 0.2906 & 0.1492 & 0.1443\tabularnewline
\hline 
0.5 & 0.7 & 0.2864 & 0.2060 & 0.1911\tabularnewline
\hline 
0.5 & 0.95 & 0.2658 & 0.8833 & 0.1751\tabularnewline
\hline 
\end{tabular}
\par\end{centering}
\caption{Gamma Standard Deviation Comparison}
\label{tab:-4}
\end{table}

\begin{figure}[H]
\centering{}%
\begin{tabular}{cc}
\includegraphics[scale=0.5]{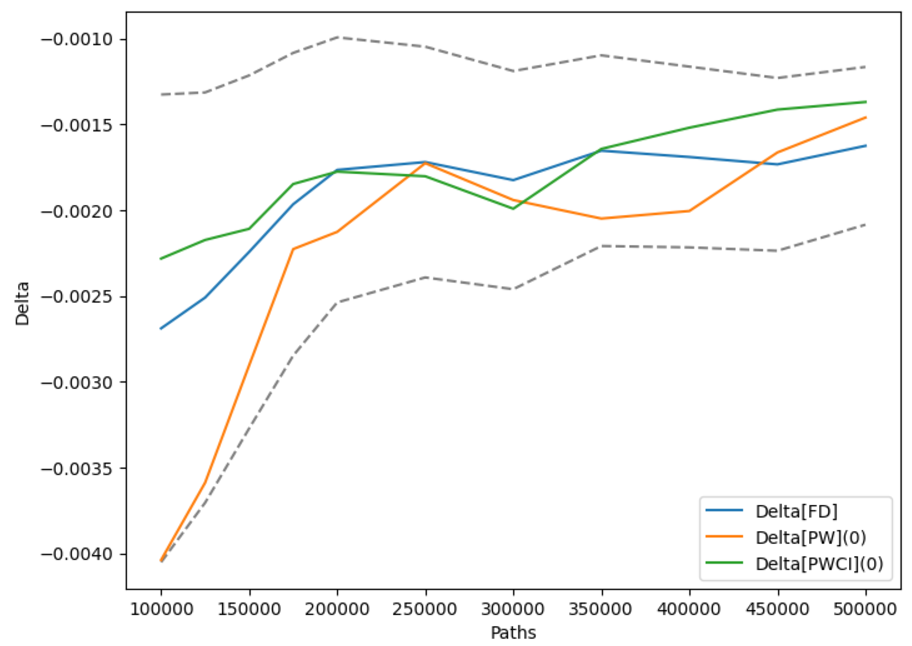} & \includegraphics[scale=0.5]{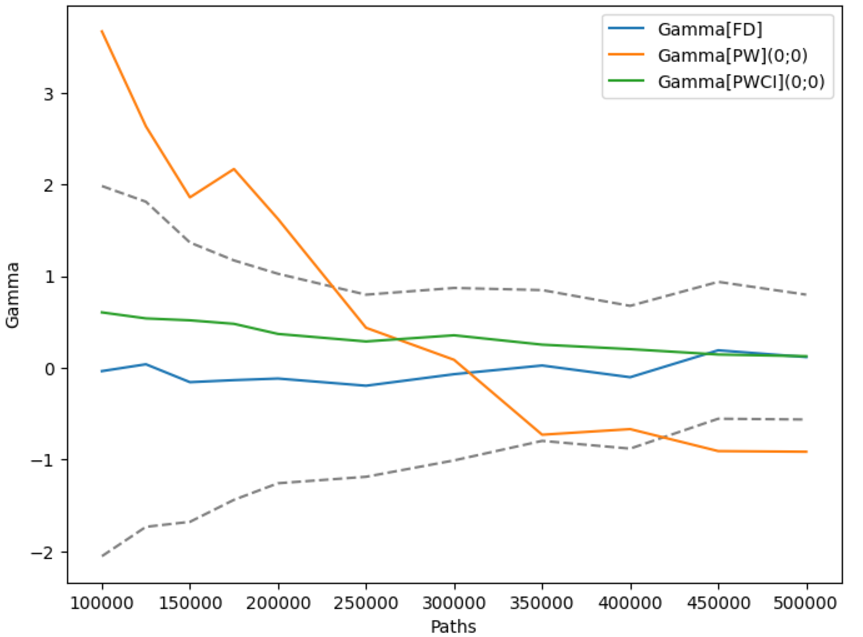}\tabularnewline
\end{tabular}\caption{Convergence Path, $\sigma=0.05,\rho=0.95$}
\label{fig:-10}
\end{figure}

\begin{figure}[H]
\centering{}%
\begin{tabular}{cc}
\includegraphics[scale=0.5]{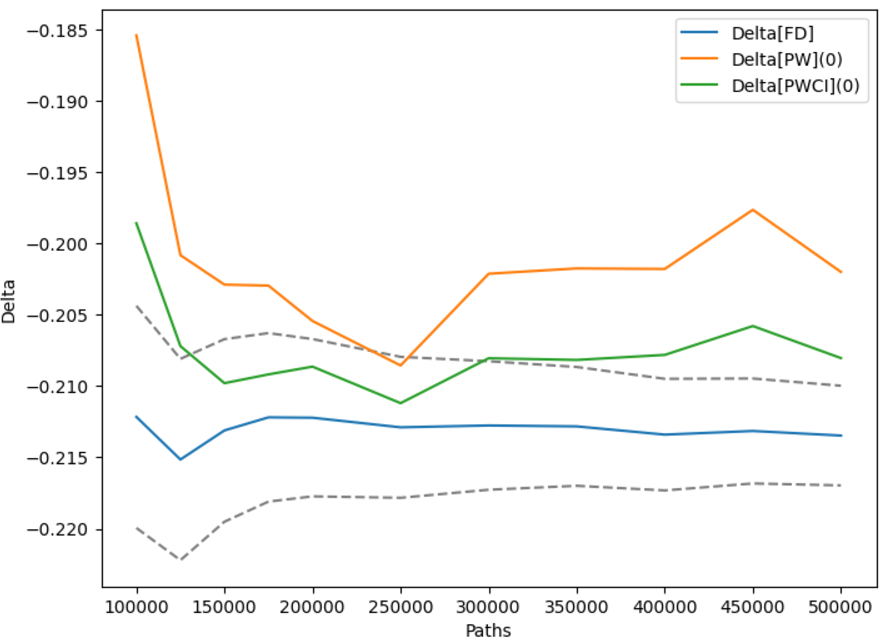} & \includegraphics[scale=0.5]{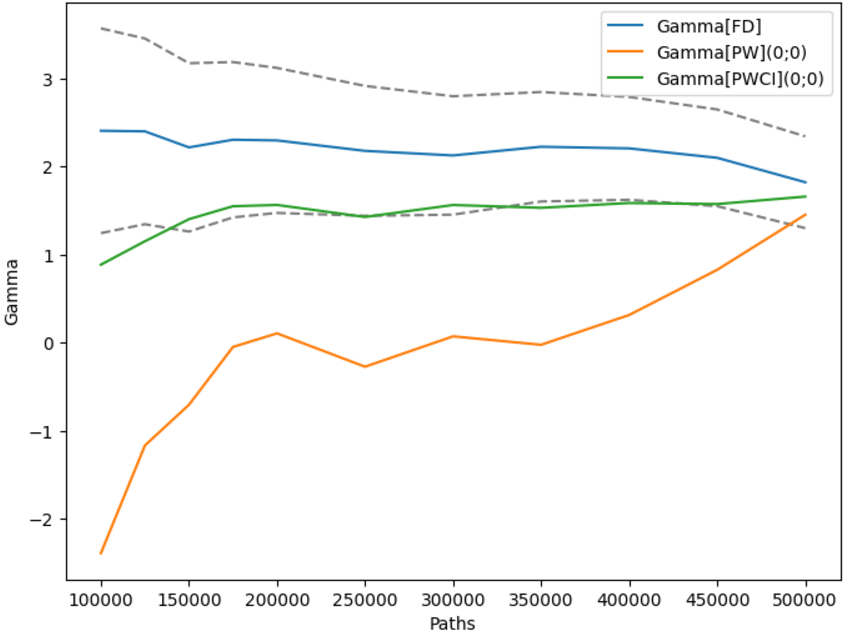}\tabularnewline
\end{tabular}\caption{Convergence Path, $\sigma=0.5,\rho=0.95$}
\label{fig:-11}
\end{figure}
\end{example}

\section{Appendix\label{sec:-3}}

\subsection{\label{subsec:-2}Proof of Theorem \ref{thm:-3}}

We provide the derivation of Theorem \ref{thm:-3} below. The following
integration by parts formula is crucial to derivation of the path
weighting formula.
\begin{lem}
\label{lem:-3}Let $X$ be a real-valued random variable having normal
distribution with variance $\sigma^{2}$. For any $F,G\in C^{1}(\mathbb{R})$
with at most exponential growth as $|x|\to\infty$, it holds that
\[
\mathbb{E}[F^{\prime}(X)G(X)]=\mathbb{E}\Big[F(X)\Big(\frac{X}{\sigma^{2}}G(X)-G^{\prime}(X)\Big)\Big].
\]
\end{lem}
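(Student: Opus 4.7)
The plan is to prove the identity as a direct Gaussian integration by parts. Let $\phi(x) = (2\pi\sigma^{2})^{-1/2}\exp(-x^{2}/(2\sigma^{2}))$ denote the density of $X$. The algebraic crux is the defining identity
\[
\phi'(x) = -\frac{x}{\sigma^{2}}\phi(x),
\]
which is precisely what produces the ``weight'' $X/\sigma^{2}$ on the right-hand side.

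First I would rewrite $\mathbb{E}[F'(X)G(X)] = \int_{\mathbb{R}} F'(x) G(x)\phi(x)\,dx$ and apply integration by parts to move the derivative off $F$, obtaining (formally)
\[
\int_{\mathbb{R}} F'(x)G(x)\phi(x)\,dx = -\int_{\mathbb{R}} F(x)\bigl(G(x)\phi(x)\bigr)'\,dx + \bigl[F(x)G(x)\phi(x)\bigr]_{-\infty}^{\infty}.
\]
Next, expand $(G\phi)' = G'\phi + G\phi'$ and substitute the density identity for $\phi'$; this yields
\[
-\int_{\mathbb{R}} F(x)G'(x)\phi(x)\,dx + \int_{\mathbb{R}} F(x)G(x)\frac{x}{\sigma^{2}}\phi(x)\,dx,
\]
which is exactly $\mathbb{E}\bigl[F(X)\bigl(\tfrac{X}{\sigma^{2}}G(X) - G'(X)\bigr)\bigr]$.

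The only thing that requires real care is justifying the integration by parts, namely showing that the boundary terms vanish and that all integrals are absolutely convergent. This is where the at-most-exponential growth hypothesis enters: for any $\lambda > 0$, $|F(x)|, |G(x)|, |G'(x)| \le Ce^{\lambda|x|}$ for some $C$, while $\phi(x)$ decays like $e^{-x^{2}/(2\sigma^{2})}$, so $|F(x)G(x)\phi(x)| \to 0$ as $|x|\to\infty$ and all three integrands belong to $L^{1}(\mathbb{R})$. This is the main (and only genuine) step; once uniform integrability and vanishing of boundary terms are established, the computation above is immediate and the identity follows by reinterpreting integrals as expectations.
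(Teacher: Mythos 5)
Your proof is correct and complete. The paper states Lemma \ref{lem:-3} without proof, treating it as a standard Gaussian integration-by-parts fact; your derivation — writing the expectation as an integral against the normal density $\phi$, integrating by parts, using $\phi'(x)=-\frac{x}{\sigma^{2}}\phi(x)$, and justifying the vanishing boundary terms and absolute integrability via the exponential-growth hypothesis against Gaussian decay — is exactly the computation the paper implicitly relies on, so there is nothing to compare and nothing missing.
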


We can now prove Theorem \ref{thm:-3}.
\begin{proof}[Proof of Theorem \ref{thm:-3}]
We may assume $F\in C^{2}$. The proof of the general case follows
from a standard approximation by convolutions argument. Denote $X_{i}^{(k)}=\sqrt{\Delta t_{i}}Z_{i}^{(k)}$.
Then
\begin{equation}
\begin{aligned}\partial_{X_{i}^{(l)}}S_{t_{j+1}}^{(k)} & =\sum_{p=1}^{n}\partial_{X_{i}^{(l)}}S_{t_{j}}^{(p)}\cdot\partial_{s^{(p)}}H_{j}^{(k)}(S_{t_{j}},X_{j+1}),\;j\ge i,\\
\partial_{X_{i}^{(l)}}S_{t_{i}}^{(k)} & =\partial_{x^{(l)}}H_{i-1}^{(k)}(S_{t_{i-1}},X_{i}).
\end{aligned}
\label{eq:-1-1-1}
\end{equation}
\begin{equation}
\begin{aligned}\partial_{s^{(l)}}S_{t_{j+1}}^{(k)} & =\sum_{p=1}^{n}\partial_{s^{(l)}}S_{t_{j}}^{(p)}\cdot\partial_{s^{(p)}}H_{j}^{(k)}(S_{t_{j}},X_{j+1}),\;j\ge1.\\
\partial_{s^{(l)}}S_{t_{1}}^{(k)} & =\partial_{s^{(l)}}H_{0}^{(k)}(s,X_{1}).
\end{aligned}
\label{eq:-2-1-1}
\end{equation}
Let $\partial_{s}H_{0}$ be the matrix given by
\[
(\partial_{s}H_{0})_{kp}=\partial_{s^{(p)}}H_{0}^{(k)},
\]
and similarly
\[
(\partial_{x}H_{0})_{kp}=\partial_{x^{(p)}}H_{0}^{(k)}.
\]
Then
\[
\partial_{s}S_{t_{j}}=(\partial_{X_{1}}S_{t_{j}})(\partial_{x}H_{0})^{-1}(\partial_{s}H_{0})=\partial_{X_{1}}S_{t_{j}}\cdot J(s,X_{1}).
\]
Therefore,
\[
\begin{aligned}\partial_{s^{(l)}}f(s) & =\mathbb{E}\Big[\sum_{j=1}^{N}\sum_{k=1}^{n}\partial_{s_{j}^{(k)}}F(S)\partial_{s^{(l)}}S_{t_{j}}^{(k)}\Big]\\
 & =\mathbb{E}\Big[\sum_{j=1}^{N}\sum_{1\le k,p\le n}\partial_{s_{j}^{(k)}}F(S)\partial_{X_{1}^{(p)}}S_{t_{j}}^{(k)}J_{pl}(s,X_{1})\Big]\\
 & =\mathbb{E}\Big[\sum_{p=1}^{n}\partial_{x_{1}^{(p)}}G(X)J_{pl}(s,X_{1})\Big].
\end{aligned}
\]
By integration by parts,
\[
\mathbb{E}[\partial_{x_{1}^{(p)}}G(X)J_{pl}(s,X_{1})]=\mathbb{E}\Big[G(X)\Big(\frac{X_{1}^{(p)}}{\Delta t_{1}}J_{pl}(s,X_{1})-\partial_{x^{(p)}}J_{pl}(s,X_{1})\Big)\Big].
\]
Therefore,
\[
\begin{aligned}\partial_{s^{(l)}}f(s) & =\mathbb{E}\Big[F(S)\sum_{p=1}^{n}\Big(\frac{Z_{1}^{(p)}}{\sqrt{\Delta t_{1}}}J_{pl}(s,\sqrt{\Delta}Z_{1})-\partial_{x^{(p)}}J_{pl}(s,\sqrt{\Delta}Z_{1})\Big)\Big]\end{aligned}
.
\]
This proves (\ref{eq:-19-1}).

Similarly, we have
\[
\begin{aligned}\partial_{s^{(l)}s^{(m)}}^{2}f(s) & =\mathbb{E}\Big[\sum_{j}^{N}\sum_{k,p}\partial_{s_{j}^{(k)}}F(S)\partial_{s^{(m)}}S_{t_{j}}^{(k)}\Big(\frac{X_{1}^{(p)}}{\Delta t_{1}}J_{pl}(s,X_{1})-\partial_{x^{(p)}}J_{pl}(s,X_{1})\Big)\Big]\\
 & \quad+\mathbb{E}\Big[F(S)\sum_{p}\Big(\frac{X_{1}^{(p)}}{\Delta t_{1}}\partial_{s^{(m)}}J_{pl}(s,X_{1})-\partial_{s^{(m)}x^{(p)}}^{2}J_{pl}(s,X_{1})\Big)\Big]\\
 & =\mathbb{E}\Big[\sum_{j=1}^{N}\sum_{k,p,r}\partial_{s_{i}^{(k)}}F(S)\partial_{X_{1}^{(r)}}S_{t_{j}}^{(k)}J_{rm}\Big(\frac{X_{1}^{(p)}}{\Delta t_{1}}J_{pl}(s,X_{1})-\partial_{x^{(p)}}J_{pl}(s,X_{1})\Big)\Big]\\
 & \quad+\mathbb{E}\Big[F(S)\sum_{p}\Big(\frac{X_{1}^{(p)}}{\Delta t_{1}}\partial_{s^{(m)}}J_{pl}(s,X_{1})-\partial_{s^{(m)}x^{(p)}}^{2}J_{pl}(s,X_{1})\Big)\Big]\\
 & =\mathbb{E}\Big[\sum_{p,r}\partial_{X_{1}^{(r)}}G(X)J_{rm}(s,X_{1})\Big(\frac{X_{1}^{(p)}}{\Delta t_{1}}J_{pl}(s,X_{1})-\partial_{x^{(p)}}J_{pl}(s,X_{1})\Big)\Big]\\
 & \quad+\mathbb{E}\Big[G(X)\sum_{p}\Big(\frac{X_{1}^{(p)}}{\Delta t_{1}}\partial_{s^{(m)}}J_{pl}(s,X_{1})-\partial_{s^{(m)}x^{(p)}}^{2}J_{pl}(s,X_{1})\Big)\Big].
\end{aligned}
\]
By integration by parts,
\[
\begin{aligned} & \mathbb{E}\Big[\partial_{X_{1}^{(r)}}G(X)J_{rm}\Big(\frac{X_{1}^{(p)}}{\Delta t_{1}}J_{pl}-\partial_{x^{(p)}}J_{pl}\Big)\Big]\\
 & =\mathbb{E}\Big[G(X)\frac{X_{1}^{(r)}}{\Delta t_{1}}J_{rm}\Big(\frac{X_{1}^{(p)}}{\Delta t_{1}}J_{pl}-\partial_{x^{(p)}}J_{pl}\Big)\Big]\\
 & \quad-\mathbb{E}\Big[G(X)\partial_{x^{(r)}}\Big(J_{rm}\Big(\frac{X_{1}^{(p)}}{\Delta t_{1}}J_{pl}-\partial_{x^{(p)}}J_{pl}\Big)\Big)\Big]\\
 & =\mathbb{E}\Big[G(X)\Big(\frac{X_{1}^{(r)}}{\Delta t_{1}}J_{rm}\frac{X_{1}^{(p)}}{\Delta t_{1}}J_{pl}-\frac{X_{1}^{(r)}}{\Delta t_{1}}J_{rm}\partial_{x^{(p)}}J_{pl}\Big)\Big]\\
 & \quad-\mathbb{E}\Big[G(X)\Big(\frac{X_{1}^{(p)}}{\Delta t_{1}}\partial_{x^{(r)}}\big(J_{rm}J_{pl}\big)+\frac{\delta_{rp}}{\Delta t_{1}}J_{rm}J_{pl}-\partial_{x^{(r)}}\big(J_{rm}\partial_{x^{(p)}}J_{pl}\big)\Big)\Big].
\end{aligned}
\]
Therefore,
\[
\begin{aligned}\partial_{s^{(l)}s^{(m)}}^{2}f(s) & =\sum_{p,r}\mathbb{E}\Big[G(X)\Big(\frac{X_{1}^{(r)}}{\Delta t_{1}}J_{rm}\frac{X_{1}^{(p)}}{\Delta t_{1}}J_{pl}-\frac{X_{1}^{(r)}}{\Delta t_{1}}J_{rm}\partial_{x^{(p)}}J_{pl}\Big)\Big]\\
 & \quad-\sum_{p,r}\mathbb{E}\Big[G(X)\Big(\frac{X_{1}^{(p)}}{\Delta t_{1}}\partial_{x^{(r)}}\big(J_{rm}J_{pl}\big)+\frac{\delta_{rp}}{\Delta t_{1}}J_{rm}J_{pl}-\partial_{x^{(r)}}\big(J_{rm}\partial_{x^{(p)}}J_{pl}\big)\Big)\Big]\\
 & \quad+\mathbb{E}\Big[G(X)\sum_{p}\Big(\frac{X_{1}^{(p)}}{\Delta t_{1}}\partial_{s^{(m)}}J_{pl}(s,X_{1})-\partial_{s^{(m)}x^{(p)}}^{2}J_{pl}(s,X_{1})\Big)\Big]\\
 & =\sum_{p,r}\mathbb{E}\Big[G(X)\big(\partial_{x^{(r)}}J_{rm}\cdot\partial_{x^{(p)}}J_{pl}\big)(s,X_{1})\Big]\\
 & \quad+\sum_{p,r}\mathbb{E}\Big[G(X)\big(J_{rm}\partial_{x^{(r)}x^{(p)}}^{2}J_{pl}\big)(s,X_{1})\Big]-\sum_{p}\mathbb{E}\Big[G(X)\partial_{s^{(m)}x^{(p)}}^{2}J_{pl}(s,X_{1})\Big]\\
 & \quad+\frac{1}{\sqrt{\Delta t_{1}}}\sum_{p}\mathbb{E}\Big[G(X)Z_{1}^{(p)}\partial_{s^{(m)}}J_{pl}(s,X_{1})\Big]\\
 & \quad-\frac{1}{\sqrt{\Delta t_{1}}}\sum_{p,r}\mathbb{E}\Big[G(X)Z_{1}^{(p)}\big(\partial_{x^{(r)}}J_{rm}\cdot J_{pl}\big)(s,X_{1})\Big]\\
 & \quad-\frac{1}{\sqrt{\Delta t_{1}}}\sum_{p,r}\mathbb{E}\Big[G(X)Z_{1}^{(p)}\big(J_{rm}\partial_{x^{(r)}}J_{pl}\big)(s,X_{1})\Big]\\
 & \quad-\frac{1}{\sqrt{\Delta t_{1}}}\sum_{p,r}\mathbb{E}\Big[G(X)Z_{1}^{(r)}\big(J_{rm}\partial_{x^{(p)}}J_{pl}\big)(s,X_{1})\Big]\\
 & \quad+\frac{1}{\Delta t_{1}}\sum_{p,r}\mathbb{E}\Big[G(X)\Big(Z_{1}^{(r)}Z_{1}^{(p)}J_{rm}J_{pl}-\delta_{rp}J_{rm}J_{pl}\Big)(s,X_{1})\Big],
\end{aligned}
\]
which completes the proof of (\ref{eq:-11}).
\end{proof}

\subsection{\label{subsec:-3}Proof of Lemma \ref{lem:-2}}

We provide the proof of Lemma \ref{lem:-2} below.
\begin{proof}[Proof of Lemma \ref{lem:-2}]
(i) Since $\partial_{x}H_{0}\cdot J=\partial_{s}H_{0}$, for any
$1\le i\le n$,
\[
\sum_{p}\partial_{x^{(p)}}H_{0}^{(i)}\cdot J_{pl}=\partial_{s^{(l)}}H_{0}^{(i)}.
\]
Differentiating the above with respect to $s^{(m)}$ gives
\begin{equation}
\sum_{p}\partial_{s^{(m)}x^{(p)}}^{2}H_{0}^{(i)}\cdot J_{pl}+\sum_{p}\partial_{x^{(p)}}H_{0}^{(i)}\cdot\partial_{s^{(m)}}J_{pl}=\partial_{s^{(l)}s^{(m)}}^{2}H_{0}^{(i)}.\label{eq:-7}
\end{equation}
By $\partial_{x}H_{0}\cdot J=\partial_{s}H_{0}$ again,
\[
\sum_{r}\partial_{x^{(r)}}H_{0}^{(i)}\cdot J_{rm}=\partial_{s^{(m)}}H_{0}^{(i)},
\]
and therefore,
\begin{equation}
\sum_{r}\partial_{x^{(r)}x^{(p)}}H_{0}^{(i)}\cdot J_{rm}+\sum_{r}\partial_{x^{(r)}}H_{0}^{(i)}\cdot\partial_{x^{(p)}}J_{rm}=\partial_{s^{(m)}x^{(p)}}H_{0}^{(i)}.\label{eq:-8}
\end{equation}
Substituting (\ref{eq:-8}) to (\ref{eq:-7}) gives
\[
\partial_{s^{(l)}s^{(m)}}^{2}H_{0}^{(i)}=\sum_{p,r}\partial_{x^{(r)}x^{(p)}}H_{0}^{(i)}\cdot J_{rm}\cdot J_{pl}+\sum_{p,r}\partial_{x^{(r)}}H_{0}^{(i)}\cdot\partial_{x^{(p)}}J_{rm}\cdot J_{pl}+\sum_{p}\partial_{x^{(p)}}H_{0}^{(i)}\cdot\partial_{s^{(m)}}J_{pl}.
\]
Denote
\[
A_{lm}^{i}=\partial_{s^{(l)}s^{(m)}}^{2}H_{0}^{(i)}-\sum_{p,r}\partial_{x^{(r)}x^{(p)}}H_{0}^{(i)}\cdot J_{rm}\cdot J_{pl}.
\]
Then
\begin{equation}
A_{lm}^{i}=\sum_{p,r}\partial_{x^{(r)}}H_{0}^{(i)}\cdot\partial_{x^{(p)}}J_{rm}\cdot J_{pl}+\sum_{p}\partial_{x^{(p)}}H_{0}^{(i)}\cdot\partial_{s^{(m)}}J_{pl}.\label{eq:-10}
\end{equation}
By exchanging $p$ and $r$ in the first term on the right hand side,
\begin{equation}
A_{lm}^{i}=\sum_{p}\partial_{x^{(p)}}H_{0}^{(i)}\cdot\Big(\sum_{r}\partial_{x^{(r)}}J_{pm}\cdot J_{rl}+\partial_{s^{(m)}}J_{pl}\Big).\label{eq:-9}
\end{equation}
Let $\xi^{l,m}=\{\xi_{p}^{l,m}\}_{p}$ be the vector given by
\[
\xi_{p}^{l,m}=\sum_{r}\partial_{x^{(r)}}J_{pm}\cdot J_{rl}+\partial_{s^{(m)}}J_{pl}.
\]
Clearly, $A_{lm}^{i}=A_{ml}^{i}$. By (\ref{eq:-9}), we obtain that
\[
\sum_{p}\partial_{x^{(p)}}H_{0}^{(i)}\cdot\xi_{p}^{l,m}=\sum_{p}\partial_{x^{(p)}}H_{0}^{(i)}\cdot\xi_{p}^{m,l},\;\text{for all}\;1\le i\le n,
\]
or equivalently
\[
\partial_{x}H_{0}\cdot(\xi^{l,m}-\xi^{m,l})=0.
\]
Since $\partial_{x}H_{0}$ is invertible, we have completed the proof
of (i).

(ii) Denote
\[
\eta_{lm}=\sum_{p,r}\partial_{x^{(r)}}\big(\partial_{x^{(p)}}J_{rm}\cdot J_{pl}\big)+\sum_{p}\partial_{s^{(m)}x^{(p)}}^{2}J_{pl}.
\]
By (\ref{eq:-10}),
\[
\begin{aligned}A_{lm}^{i} & =\sum_{p,r}\partial_{x^{(r)}}H_{0}^{(i)}\cdot\partial_{x^{(p)}}J_{rm}\cdot J_{pl}+\sum_{p}\partial_{x^{(p)}}H_{0}^{(i)}\cdot\partial_{s^{(m)}}J_{pl}\\
 & =\sum_{p,r}\partial_{x^{(r)}}\big(H_{0}^{(i)}\cdot\partial_{x^{(p)}}J_{rm}\cdot J_{pl}\big)-\sum_{p,r}H_{0}^{(i)}\cdot\partial_{x^{(r)}}\big(\partial_{x^{(p)}}J_{rm}\cdot J_{pl}\big)\\
 & \quad+\sum_{p}\partial_{x^{(p)}}\big(H_{0}^{(i)}\cdot\partial_{s^{(m)}}J_{pl}\big)-\sum_{p}H_{0}^{(i)}\cdot\partial_{s^{(m)}x^{(p)}}^{2}J_{pl}\\
 & =\sum_{p,r}\partial_{x^{(r)}}\big(H_{0}^{(i)}\cdot\partial_{x^{(p)}}J_{rm}\cdot J_{pl}\big)+\sum_{p}\partial_{x^{(p)}}\big(H_{0}^{(i)}\cdot\partial_{s^{(m)}}J_{pl}\big)-H_{0}^{(i)}\eta_{lm}.
\end{aligned}
\]
Exchanging $p$ and $r$ in the first term of the last equality gives
\[
\begin{aligned}A_{lm}^{i} & =\sum_{p}\partial_{x^{(p)}}\Big(\sum_{r}H_{0}^{(i)}\cdot\partial_{x^{(r)}}J_{pm}\cdot J_{rl}\Big)+\sum_{p}\partial_{x^{(p)}}\big(H_{0}^{(i)}\cdot\partial_{s^{(m)}}J_{pl}\big)-H_{0}^{(i)}\eta_{lm}\\
 & =\sum_{p}\partial_{x^{(p)}}\big(H_{0}^{(i)}\xi_{p}^{l,m}\big)-H_{0}^{(i)}\eta_{lm}.
\end{aligned}
\]
By (i), $\xi_{p}^{l,m}=\xi_{p}^{m,l}$, which together with $A_{lm}^{i}=A_{ml}^{i}$
implies that
\[
H_{0}^{(i)}(\eta_{lm}-\eta_{ml})=0.
\]
Note that $J$ is invariant under replacement of $H_{0}^{(i)}$ by
$H_{0}^{(i)}+C$. Therefore, we may assume $H_{0}^{(i)}\not=0$ and
deduce that $\eta_{lm}=\eta_{ml}$. This, together with the $(m,l)$
symmetricity of 
\[
\sum_{p,r}\partial_{x^{(r)}}J_{rm}\partial_{x^{(p)}}J_{pl}
\]
completes the proof of (ii).

(iii) By (ii) and the $(m,l)$ symmetricity of 
\[
\sum_{p,r}\partial_{x^{(r)}}J_{rm}\partial_{x^{(p)}}J_{pl},
\]
$\Lambda^{(0)}$ is symmetric. 

To see that $\Lambda^{(1)}$ is symmetric, let
\[
\kappa_{lm}=\sum_{p,r}z^{(p)}\partial_{x^{(r)}}J_{rm}\cdot J_{pl}+\sum_{p,r}z^{(r)}J_{rm}\cdot\partial_{x^{(p)}}J_{pl}.
\]
Then
\[
\Lambda_{lm}^{(1)}+\kappa_{lm}=\sum_{p}z^{(p)}\partial_{s^{(m)}}J_{pl}-\sum_{p,r}z^{(p)}J_{rm}\partial_{x^{(r)}}J_{pl}.
\]
It is easily seen that $\kappa_{lm}=\kappa_{ml}$. Therefore
\[
\begin{aligned}\Lambda_{lm}^{(1)}-\Lambda_{ml}^{(1)} & =\sum_{p}z^{(p)}\partial_{s^{(m)}}J_{pl}-\sum_{p,r}z^{(p)}J_{rm}\partial_{x^{(r)}}J_{pl}-\sum_{p}z^{(p)}\partial_{s^{(l)}}J_{pm}+\sum_{p,r}z^{(p)}J_{rl}\partial_{x^{(r)}}J_{pm}\\
 & =\sum_{p}z^{(p)}\Big[\Big(\sum_{r}\partial_{x^{(r)}}J_{pm}\cdot J_{rl}+\partial_{s^{(m)}}J_{pl}\Big)-\Big(\sum_{r}\partial_{x^{(r)}}J_{pl}\cdot J_{rm}+\partial_{s^{(l)}}J_{pm}\Big)\Big].
\end{aligned}
\]
It follows from (i) that $\Lambda_{lm}^{(1)}=\Lambda_{ml}^{(1)}$.
This completes the proof of (iii).
\end{proof}

\subsection{\label{subsec:-4}Proof of Lemma \ref{lem:}}

We provide the proof of Lemma \ref{lem:} below.
\begin{proof}[Proof of Lemma \ref{lem:}]
(i) By Theorem \ref{thm:-1}, we may assume $F(S)=\chi_{\{\phi(S_{t_{2}},\dots,S_{t_{N}})>0\}}$.
We shall present the proof for the case where $F(S)=\chi_{\{S_{t_{2}}\in D\}}$
for some open subset $D\subset\mathbb{R}^{n}$. The proof for general
case is similar but requires more tedious notation.

Since $\Theta^{(1)}$ is a smooth function, using similar arguement
as in Theorem \ref{thm:-1}, it can be easily shown that the random
variables
\[
F(S)\Theta^{(1)}(s,Z_{1},\sqrt{\Delta t_{1}}Z_{1})-F(\hat{S})\Theta^{(1)}(s,Z_{1},0),
\]
\[
F(\hat{S})\Theta^{(1)}(s,Z_{1},\sqrt{\Delta t_{1}}\hat{Z}_{1})-F(\hat{S})\Theta^{(1)}(s,Z_{1},0),
\]
have variance of order $\mathrm{O}(\Delta t_{1})$ as $\Delta t_{1}\to0$.
Hence, we only need to prove that the random variable
\[
\frac{1}{\sqrt{\Delta t_{1}}}\big(F(S)-F(\hat{S})\big)\Theta^{(1)}(s,Z_{1},0)
\]
 has variance of order $\mathrm{O}((\Delta t_{1})^{-1/2})$. Equivalently,
we only need to prove the same for 
\[
\tilde{\xi}=\frac{1}{\sqrt{\Delta t_{1}}}\big(F(S)-F(\hat{S})\big)Z_{1}.
\]
Clearly,
\[
|\tilde{\xi}|^{2}=\frac{|Z_{1}|^{2}}{\Delta t_{1}}\chi_{\{S_{t_{2}}\in D,\hat{S}_{t_{2}}\not\in D\}\cup\{S_{t_{2}}\not\in D,\hat{S}_{t_{2}}\in D\}}\le\frac{|Z_{1}|^{2}}{\Delta t_{1}}\big(\chi_{\{S_{t_{2}}\in D,\hat{S}_{t_{2}}\not\in D\}}+\chi_{\{S_{t_{2}}\not\in D,\hat{S}_{t_{2}}\in D\}}\big).
\]
By symmetricity of $Z_{1}$,
\[
\mathbb{E}(|\tilde{\xi}|^{2})\le\frac{2}{\Delta t_{1}}\mathbb{E}\big(|Z_{1}|^{2}\chi_{\{S_{t_{2}}\in D,\hat{S}_{t_{2}}\not\in D\}}\big).
\]
The sets $\{S_{t_{2}}\in D,\hat{S}_{t_{2}}\not\in D\}$ is given by
\begin{equation}
\{H_{1}(H_{0}(s,\sqrt{\Delta t_{1}}Z_{1}),\sqrt{t_{2}-\Delta t_{1}}Z_{2})\in D,H_{1}(H_{0}(s,-\sqrt{\Delta t_{1}}Z_{1}),\sqrt{t_{2}-\Delta t_{1}}Z_{2})\not\in D\}.\label{eq:-34}
\end{equation}
For each $s$, let $I(s,\cdot)$ be the inverse of the map $x\mapsto H_{1}(s,x)$.
That is, $I(s,H_{1}(s,x))=x$. The set (\ref{eq:-34}) can be written
as 
\[
\{(Z_{1},Z_{2}):\sqrt{t_{2}-\Delta t_{1}}Z_{2}\in I(H_{0}(s,\sqrt{\Delta t_{1}}Z_{1}),D)\cap I(H_{0}(s,-\sqrt{\Delta t_{1}}Z_{1}),\mathbb{R}^{n}\backslash D).
\]
For any $x\in\mathbb{R}^{n}$ with $|x|\le1$, since $I$ and $H_{0}$
are differentiable, the probability
\[
p(x)=\mathbb{P}(\sqrt{t_{2}-\Delta t_{1}}Z_{2}\in I(H_{0}(s,x),D)\cap I(H_{0}(s,-x),\mathbb{R}^{n}\backslash D))
\]
is differentiable in $x$, and is zero at $x=0$. Therefore, 
\[
p(x)\le C|x|,\;\text{for all}\;|x|\le1,
\]
where $C>0$ is a constant which can be chosen to be independent of
$\Delta t_{1}$. Hence,
\[
\begin{aligned}\mathbb{E}(|\tilde{\xi}|^{2}) & \le\frac{2}{\Delta t_{1}}\mathbb{E}\big(|Z_{1}|^{2}\chi_{\{S_{t_{2}}\in D,\hat{S}_{t_{2}}\not\in D\}}\big)\\
 & =\frac{2}{\Delta t_{1}}\mathbb{E}(|Z_{1}|^{2}p(\sqrt{\Delta t_{1}}Z_{1}))\\
 & \le\frac{2}{\Delta t_{1}}\Big(C\sqrt{\Delta t_{1}}\mathbb{E}(|Z_{1}|^{3}\chi_{\{\sqrt{\Delta t_{1}}|Z_{1}|\le1\}})+\mathbb{E}(|Z_{1}|^{2}\chi_{\{\sqrt{\Delta t_{1}}|Z_{1}|>1\}})\Big)\\
 & \le\frac{2}{\Delta t_{1}}\Big(C\sqrt{\Delta t_{1}}\mathbb{E}(|Z_{1}|^{3})+\mathbb{E}(|Z_{1}|^{2}\chi_{\{\sqrt{\Delta t_{1}}|Z_{1}|>1\}})\Big)\\
 & \le\frac{C}{\sqrt{\Delta t_{1}}}+\frac{C}{\Delta t_{1}}\int_{1/\sqrt{\Delta t_{1}}}^{\infty}r^{2}e^{-\frac{r^{2}}{2}}\cdot r^{n-1}dr\\
 & \le\frac{C}{\sqrt{\Delta t_{1}}}+\frac{C}{\Delta t_{1}}e^{-\frac{1}{4\Delta t_{1}}}\\
 & \le\frac{C}{\sqrt{\Delta t_{1}}},
\end{aligned}
\]
as $\Delta t_{1}\to0$, where, by abuse of notation, the constant
$C>0$ may vary from line to line. This completes the proof of (\ref{eq:-40}). 

The proof of (\ref{eq:-41}) is similar to that of (\ref{eq:-40}).

For the proof of (\ref{eq:-42}), similar to the above, we only need
to prove the $\text{O}((\Delta t_{1})^{-3/2})$ variance for the random
variable
\[
\frac{1}{\Delta t_{1}}\big(F(S)-2F(\bar{S})+F(\hat{S})\big)\Lambda^{(2)}(s,Z_{1},0).
\]
Equivalently, we only show that the variance of 
\[
\tilde{\eta_{2}}=\frac{1}{\Delta t_{1}}\big(F(S)-2F(\bar{S})+F(\hat{S})\big)(Z_{1}^{\text{T}}Z_{1}-I)
\]
is of order $\text{O}((\Delta t_{1})^{-3/2})$ as $\Delta t_{1}\to0$.
Using similar argument as before, it can be shown that
\[
\mathbb{E}\big(|F(S)-2F(\bar{S})+F(\hat{S})|^{2}\big|Z_{1}\big)=\mathbb{E}\big(|\chi_{\{S_{t_{2}}\in D\}}-2\chi_{\{\bar{S}_{t_{2}}\in D\}}+\chi_{\{\hat{S}_{t_{2}}\in D\}}|^{2}\big|Z_{1}\big)\le C\sqrt{\Delta t_{1}}|Z_{1}|
\]
for all $Z_{1}$ with $|Z_{1}|\le1/\sqrt{\Delta t_{1}}$. Therefore,
\[
\begin{aligned}\mathbb{E}(|\tilde{\eta_{2}}|^{2}) & \le\frac{1}{(\Delta t_{1})^{2}}\Big(C\sqrt{\Delta t_{1}}\mathbb{E}\big((1+|Z_{1}|)^{5}\chi_{\{\sqrt{\Delta t_{1}}|Z_{1}|\le1\}}\big)+\mathbb{E}\big((1+|Z_{1}|)^{4}\chi_{\{\sqrt{\Delta t_{1}}|Z_{1}|>1\}}\big)\Big)\\
 & \le\frac{C}{(\Delta t_{1})^{2}}\Big(\sqrt{\Delta t_{1}}\mathbb{E}\big((1+|Z_{1}|)^{5}\big)+\mathbb{E}\big(|Z_{1}|^{4}\chi_{\{\sqrt{\Delta t_{1}}|Z_{1}|>1\}}\big)\Big)\\
 & \le\frac{C}{(\Delta t_{1})^{3/2}}+\frac{C}{(\Delta t_{1})^{2}}\int_{1/\sqrt{\Delta t_{1}}}^{\infty}r^{4}e^{-\frac{r^{2}}{2}}\cdot r^{n-1}dr\\
 & \le\frac{C}{(\Delta t_{1})^{3/2}}+\frac{C}{(\Delta t_{1})^{2}}e^{-\frac{1}{4\Delta t_{1}}}\\
 & \le\frac{C}{(\Delta t_{1})^{3/2}}.
\end{aligned}
\]
This completes the proof of (\ref{eq:-42}).

(ii) Equations (\ref{eq:-60}) and (\ref{eq:-61}) are the result
of Theorem \ref{thm:-1}. For equation (\ref{eq:-46}), without loss
of generality, we may asume $F=F^{c}$ and $\partial F^{c}=\chi_{\{\phi(S_{t_{2}},\dots,S_{t_{N}})>0\}}$.
Using the same notation as in (i), by H\textcyr{\"\cyro}lder's inequality,
\begin{align*}
|\tilde{\eta_{2}}|^{2} & \le\frac{C(1+|Z_{1}|)^{5}}{(\Delta t_{1})^{2}}\Big|\int_{0}^{\sqrt{\Delta t_{1}}}\partial_{x_{1}}K(\lambda Z_{1},\sqrt{\Delta t_{2}}Z_{2},\dots,\sqrt{\Delta t_{N}}Z_{N})-\partial_{x_{1}}K(-\lambda Z_{1},\sqrt{\Delta t_{2}}Z_{2},\dots,\sqrt{\Delta t_{N}}Z_{N})d\lambda\Big|^{2}\\
 & \le\frac{C(1+|Z_{1}|)^{5}}{(\Delta t_{1})^{3/2}}\int_{0}^{\sqrt{\Delta t_{1}}}\big|\partial_{x_{1}}K(\lambda Z_{1},\sqrt{\Delta t_{2}}Z_{2},\dots,\sqrt{\Delta t_{N}}Z_{N})-\partial_{x_{1}}K(-\lambda Z_{1},\sqrt{\Delta t_{2}}Z_{2},\dots,\sqrt{\Delta t_{N}}Z_{N})\big|^{2}d\lambda.
\end{align*}
Therefore,
\[
\mathbb{E}(|\tilde{\eta_{2}}|^{2})\le\frac{C}{(\Delta t_{1})^{3/2}}\int_{0}^{\sqrt{\Delta t_{1}}}\mathbb{E}\big[(1+|Z_{1}|)^{5}\big|\partial_{x_{1}}K(\lambda Z_{1},\sqrt{\Delta t_{2}}Z_{2},\dots,\sqrt{\Delta t_{N}}Z_{N})-\partial_{x_{1}}K(-\lambda Z_{1},\sqrt{\Delta t_{2}}Z_{2},\dots,\sqrt{\Delta t_{N}}Z_{N})\big|^{2}\big]d\lambda.
\]
By the same argument as in (i), it can be shown that
\[
\mathbb{E}\big[(1+|Z_{1}|)^{5}\big|\partial_{x_{1}}K(\lambda Z_{1},\sqrt{\Delta t_{2}}Z_{2},\dots,\sqrt{\Delta t_{N}}Z_{N})-\partial_{x_{1}}K(-\lambda Z_{1},\sqrt{\Delta t_{2}}Z_{2},\dots,\sqrt{\Delta t_{N}}Z_{N})\big|^{2}\big]\le C\lambda.
\]
Therefore,
\[
\mathbb{E}(|\tilde{\eta_{2}}|^{2})\le\frac{C}{(\Delta t_{1})^{3/2}}\int_{0}^{\sqrt{\Delta t_{1}}}\lambda d\lambda=\frac{C}{\sqrt{\Delta t_{1}}}.
\]
This proves (\ref{eq:-46}).
\end{proof}

\subsection{\label{subsec:-5}Proof of Theorem \ref{thm:-2}}

We provide the proof of Theorem \ref{thm:-2} below. We start from
the following bias estimate.
\begin{lem}
\label{lem:-1}Let $X$, $\tilde{X}$ be two $n$-dimensional random
variables such that $\mathbb{E}(X)=\mathbb{E}(\tilde{X})=x_{0}$,
and $g(x)\in C^{2}(\mathbb{R}^{n})$. Let
\begin{equation}
\kappa(r)=\max_{|x|\le r}|\partial_{x}^{2}g(x)|,\;r>0.\label{eq:-3}
\end{equation}
Then
\[
\big|\mathbb{E}[g(\tilde{X})]-\mathbb{E}[g(X)]\big|\le C\mathbb{E}\big[\big(\kappa(|x_{0}|)+\kappa(|X|)+\kappa(|\tilde{X}|)\big)(|\tilde{X}-X|^{2}+|X-x_{0}||\tilde{X}-X|)\big].
\]
\end{lem}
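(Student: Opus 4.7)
The plan is to combine a first-order Taylor expansion of $g$ about the random point $X$ with a centering trick that exploits the constraint $\mathbb{E}(X) = \mathbb{E}(\tilde{X}) = x_0$. Since $g \in C^2$, Taylor's theorem in Lagrange form gives
\[
g(\tilde{X}) - g(X) = \nabla g(X) \cdot (\tilde{X}-X) + R(X,\tilde{X}),
\]
where the remainder $R$ satisfies $|R(X,\tilde{X})| \le \tfrac{1}{2}\kappa\bigl(\max(|X|,|\tilde{X}|)\bigr)\,|\tilde{X}-X|^2 \le \tfrac{1}{2}\bigl(\kappa(|X|)+\kappa(|\tilde{X}|)\bigr)|\tilde{X}-X|^2$. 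Taking expectations, this remainder already produces exactly the $|\tilde{X}-X|^2$ contribution on the right-hand side of the claim.

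The first-order term $\mathbb{E}[\nabla g(X)\cdot(\tilde{X}-X)]$ is where the main work lies. The trick is to add and subtract $\nabla g(x_0)$:
\[
\mathbb{E}[\nabla g(X)\cdot(\tilde{X}-X)] = \mathbb{E}\bigl[(\nabla g(X)-\nabla g(x_0))\cdot(\tilde{X}-X)\bigr] + \nabla g(x_0)\cdot \mathbb{E}(\tilde{X}-X).
\]
The second term vanishes because $\mathbb{E}(X)=\mathbb{E}(\tilde{X})=x_0$. For the first, another application of the mean-value theorem (this time to $\nabla g$) gives $|\nabla g(X)-\nabla g(x_0)| \le \kappa(\max(|X|,|x_0|))\,|X-x_0| \le (\kappa(|X|)+\kappa(|x_0|))\,|X-x_0|$, which after Cauchy--Schwarz on a pointwise level yields
\[
\bigl|\mathbb{E}[\nabla g(X)\cdot(\tilde{X}-X)]\bigr| \le \mathbb{E}\bigl[(\kappa(|X|)+\kappa(|x_0|))\,|X-x_0|\,|\tilde{X}-X|\bigr].
\]

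Combining these two estimates with the triangle inequality produces exactly the bound in the statement, with $C = \tfrac{1}{2}$ (or some explicit constant absorbing dimension factors from writing $\nabla g\cdot v$ and $v^{\mathsf T}\partial^2 g\, v$ componentwise). The only genuinely content-bearing step is the centering at $x_0$ in the first-order term — without using $\mathbb{E}(X-x_0) = \mathbb{E}(\tilde{X}-x_0) = 0$, one could only bound $|\mathbb{E}[\nabla g(X)\cdot(\tilde{X}-X)]|$ by $\mathbb{E}[|\nabla g(X)|\,|\tilde{X}-X|]$, which does not produce the extra factor of $|X-x_0|$ needed for the $\mathrm{O}(\epsilon\Delta t_1)$ bias estimate in Theorem \ref{thm:-2}. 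The remaining step is purely bookkeeping: bound $\kappa(\max(\cdot,\cdot))$ by the sum $\kappa(\cdot)+\kappa(\cdot)$ using monotonicity of $\kappa$, and collect terms.
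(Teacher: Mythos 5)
Your proof is correct and follows essentially the same route as the paper's: a second-order Taylor expansion in Lagrange form to isolate the Hessian term, followed by the centering trick $\nabla g(X) = \nabla g(x_0) + (\nabla g(X) - \nabla g(x_0))$ to kill the leading first-order term via $\mathbb{E}(\tilde{X}-X)=0$, and a mean-value estimate on $\nabla g$ for the residual. The bound $\kappa(\max(a,b))\le\kappa(a)+\kappa(b)$ by monotonicity of $\kappa$ matches the paper's handling of the intermediate points on both segments.
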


\begin{proof}
By Talor expansion,
\[
g(\tilde{X})-g(X)=\partial_{x}g(X)(\tilde{X}-X)+\frac{1}{2}(\tilde{X}-X)^{\text{T}}\partial_{x}^{2}g(\theta X+(1-\theta)\tilde{X})(\tilde{X}-X),
\]
for some $\theta\in(0,1)$. By (\ref{eq:-3}), we have
\begin{equation}
\big|(\tilde{X}-X)^{\text{T}}\partial_{x}^{2}g(\theta X+(1-\theta)\tilde{X})(\tilde{X}-X)\big|\le\big(\kappa(|X|)+\kappa(|\tilde{X}|)\big)|\tilde{X}-X|^{2}.\label{eq:-4}
\end{equation}
By $\mathbb{E}(X)=\mathbb{E}(\tilde{X})$,
\begin{equation}
\begin{aligned}\mathbb{E}\big[\partial_{x}g(X)(\tilde{X}-X)\big] & =\mathbb{E}[\partial_{x}g(x_{0})(\tilde{X}-X)\big]+\mathbb{E}\big[(\partial_{x}g(X)-\partial_{x}g(x_{0}))(\tilde{X}-X)\big]\\
 & =\mathbb{E}\big[(\partial_{x}g(X)-\partial_{x}g(x_{0}))(\tilde{X}-X)\big].
\end{aligned}
\label{eq:-6}
\end{equation}
Note that, by Taylor expansion again,
\[
|\partial_{x}g(X)-\partial_{x}g(x_{0})|\le\big(\kappa(|x_{0}|)+\kappa(|X|)\big)|X-x_{0}|.
\]
Therefore,
\begin{equation}
\begin{aligned}\big|\mathbb{E}\big[(\partial_{x}g(X)-\partial_{x}g(x_{0}))(\tilde{X}-X)\big]\big| & \le\mathbb{E}\big[\big(\kappa(|x_{0}|)+\kappa(|X|)\big)|X-x_{0}||\tilde{X}-X|\big]\end{aligned}
\label{eq:-47}
\end{equation}
Combining (\ref{eq:-4}), (\ref{eq:-6}), and (\ref{eq:-47}) completes
the proof.
\end{proof}
We can now prove Theorem \ref{thm:-2}.
\begin{proof}[Proof of Theorem \ref{thm:-2}]
Clearly,
\[
\mathbb{E}[F(S_{t_{2}},\dots,S_{t_{N}})|S_{t_{1}}=\theta]=\mathbb{E}[F(\tilde{S}_{t_{2}},\dots,\tilde{S}_{t_{N}})|\tilde{S}_{t_{1}}=\theta].
\]
Let
\[
g(x)=\mathbb{E}[F(S_{t_{2}},\dots,S_{t_{N}})|S_{t_{1}}=H(t_{1},\sqrt{\Delta t_{1}},s,x)].
\]
Then
\begin{equation}
\mathbb{E}(F(S_{t_{2}},\dots,S_{t_{N}}))-\mathbb{E}(F(\tilde{S}_{t_{2}},\dots,\tilde{S}_{t_{N}}))=\mathbb{E}(g(\sqrt{\Delta t_{1}}Z_{1}))-\mathbb{E}(g(\sqrt{\Delta t_{1}}Z_{1}+\epsilon\sqrt{\Delta t_{1}}Y)).\label{eq:-48}
\end{equation}
By applying (\ref{eq:-11}) to $S_{t}$ conditional on $S_{t_{1}}$,
we obtain that $g(x)\in C^{2}(\mathbb{R}^{n})$ and
\[
|\partial_{x}^{2}g(x)|\le C_{1}e^{C_{2}|x|},
\]
where the constants $C_{1}>0$ and $C_{2}>0$ can be chosen as independent
of $\Delta t_{1}$\footnote{Note that, for the conditional process, the $\Delta t_{1}$ in (\ref{eq:-11})
becomes $\Delta t_{2}$ here.}. By Lemma \ref{lem:-1}, 
\begin{align*}
 & \big|\mathbb{E}(g(\sqrt{\Delta t_{1}}Z_{1}))-\mathbb{E}(g(\sqrt{\Delta t_{1}}Z_{1}+\epsilon\sqrt{\Delta t_{1}}Y))\big|\\
 & \le C\mathbb{E}\big[(e^{C_{2}\sqrt{\Delta t_{1}}|Z_{1}|}+e^{C_{2}\epsilon\sqrt{\Delta t_{1}}|Y|})(\epsilon\Delta t_{1}|Y||Z_{1}|+\epsilon^{2}\Delta t_{1}|Y|^{2})\big]\\
 & \le C\epsilon\Delta t_{1}\mathbb{E}\big[(e^{C_{2}\sqrt{\Delta t_{1}}|Z_{1}|}+e^{C_{2}\epsilon\sqrt{\Delta t_{1}}|Y|})(|Z_{1}|^{2}+|Y|^{2})\big].
\end{align*}
Since 
\[
\mathbb{E}(e^{\lambda|Y|})<\infty,\mathbb{E}(e^{\lambda|Z_{1}|})<\infty,
\]
we obtain that
\[
\mathbb{E}\big[(e^{C_{2}\sqrt{\Delta t_{1}}|Z_{1}|}+e^{C_{2}\epsilon\sqrt{\Delta t_{1}}|Y|})(|Z_{1}|^{2}+|Y|^{2})\big]<C<\infty,
\]
for some constant $C>0$ independent of $\epsilon$. Therefore,
\begin{equation}
\big|\mathbb{E}(g(\sqrt{\Delta t_{1}}Z_{1}))-\mathbb{E}(g(\sqrt{\Delta t_{1}}Z_{1}+\epsilon\sqrt{\Delta t_{1}}Y))\big|\le C\epsilon\Delta t_{1}.\label{eq:-49}
\end{equation}
Combining (\ref{eq:-48}) and (\ref{eq:-49}) completes the proof.
\end{proof}

\end{document}